\documentclass{article}
\usepackage[left=2.5cm,top=2cm,right=2.5cm,bottom=2cm]{geometry}

\usepackage{tikz}
	\usetikzlibrary{math}
\usepackage{amsthm, amssymb, amsmath, amsrefs, cancel}
\usepackage[capitalise, noabbrev]{cleveref}
\usepackage{xcolor}
\usepackage{appendix}
\usepackage{multirow}
\usepackage{array}
\usepackage{bold-extra}

\theoremstyle{plain}
	\newtheorem{theorem}{Theorem}[section]

	\newtheorem{lemma}[theorem]{Lemma}

\theoremstyle{definition}
	\newtheorem{example}[theorem]{Example}
	\newtheorem{definition}[theorem]{Definition}
\theoremstyle{remark}

\DeclareMathOperator{\STS}{STS}
\newcommand{\Z}{\mathbb{Z}}
\newcommand{\nofil}{\textsc{Nofil}}

\newcommand{\nodekayles}[0]{\textsc{Node Kayles}}

\usepackage{xspace}
\makeatletter
\DeclareRobustCommand\onedot{\futurelet\@let@token\@onedot}
\def\@onedot{\ifx\@let@token.\else.\null\fi\xspace}

\makeatother
\usepackage{soul}

\begin{document}
\begin{center}
\uppercase{\bf Minimal Graph Embeddings via Point Deletions in Steiner triple systems}
\vskip 20pt
{\bf Melissa A. Huggan}\footnote{Supported by the Natural Sciences and Engineering Research Council of Canada (funding reference number RGPIN-2023-03395 and DGECR-2023-00190)
.}\\
{\small Department of Mathematics, Vancouver Island University\\Nanaimo, British Columbia, Canada}\\
{\tt melissa.huggan@viu.ca}\\ 
\vskip 10pt
{\bf Svenja Huntemann}\footnote{Supported by the Natural Sciences and Engineering Research Council of Canada (funding reference number RGPIN-2022-04273 and DGECR-2022-00452).}\\
{\small Department of Mathematics and Statistics, Mount Saint Vincent University\\Halifax, Nova Scotia, Canada}\\
{\tt svenja.huntemann@msvu.ca}\\ 
\vskip 10pt
{\bf Brett Stevens}\footnote{Supported by the Natural Sciences and Engineering Research Council of Canada (funding reference number RGPIN-2023-04668).}\\
{\small School of Mathematics and Statistics, Carleton University\\Ottawa, Ontario, Canada}\\
{\tt brett@math.carleton.ca}\\ 
\end{center}
\begin{abstract}
The game \nofil{} is a two-player combinatorial game in which players take turns marking points of a design such that the set of marked points does not contain a block. Equivalently, we can think of the points as being deleted from the design and points that are on singleton sets can no longer be marked. Every game play eventually results in the design becoming a graph. Previous work has shown that every graph is reachable from some Steiner triple system ($\STS$), although the order of the constructed $\STS$ is often far from the known lower bounds. In this paper we give embeddings of complete graphs and star graphs into a $\STS$ that is minimal or very nearly meets the bounds. We further discuss possible minimal embeddings of empty graphs, paths, and cycles.
\end{abstract}

\section{Introduction}

In a previous paper \cite{HugganHS2021}, we defined a game, \nofil{}, played on a Steiner triple system and noted its relation to an existing game, \nodekayles{}, played on graphs. We proved that for any graph and all sufficiently large $v \equiv 1,3 \pmod 6$, there exists a Steiner triple system on $v$ points in which the graph is embedded in such a way that playing \nofil{} on the Steiner triple system can reduce to playing \nodekayles{} on the chosen graph.  From this we proved a PSPACE completeness result for \nofil{}.  The Steiner triple systems for those embeddings are very large.  In this paper we are interested in understanding what are the smallest possible Steiner triple systems into which a given graph can be embedded. First, we review the objects of our study and previous results in more detail.

A \emph{combinatorial game} is a two player game of perfect information and no chance, where players move alternately.  A well-studied winning convention is \emph{normal play}, where the player without a move on their turn loses. The interested reader can learn more about combinatorial game theory from \cites{ANW, Siegel}.  \nodekayles{} is an impartial combinatorial game that is played on a graph.

\begin{definition}[\nodekayles{} \cite{schaefer_78}] 
The \emph{board} is a graph.  On their turn, a player chooses an unplayed vertex from the graph as long as it is not adjacent to any other previously chosen vertices. Chosen vertices are simply considered \emph{played}, regardless of the player who chose them (this is {\em impartiality}). Any vertex adjacent to a played vertex is unplayable, i.e.\ it is forbidden to create an edge with all points played. If every unplayed point is also unplayable, then no move is possible, the game ends, and the last player to move wins. 
\end{definition}

In a previous paper we defined a game which generalizes \nodekayles{} to hypergraphs \cite{HugganHS2021}.  
\begin{definition}[\nofil{}: {\bf N}ext {\bf O}ne to {\bf F}ill {\bf I}s the {\bf L}oser]
The \emph{board} is a hypergraph. On their turn, a player chooses an unplayed vertex from the hypergraph as long as it does not complete a hyperedge of {\em played} points. Chosen points are simply considered \emph{played}, regardless of the player who chose them. If all but one point from any hyperedge have been played, the last point is unplayable. If every unplayed point is also unplayable, the game ends, and the last player to move wins. 
\end{definition}

The set of played vertices at the end of the game forms a maximal independent set in the sense that it does not contain any hyperedge \cite{MR309807}. We note that the game can be played on any hypergraph but we are interested in playing the game on Steiner triple systems. 
 
\begin{definition}[\cite{Handbook}]
A \emph{Steiner triple system} ($\STS(v)$) ($V$, $\mathcal{B}$) is a set $V$ of $v$ elements together with a collection $\mathcal{B}$ of $3$-subsets (blocks or triples) of $V$ with the property that every $2$-subset of $V$ occurs in exactly one block $B \in \mathcal{B}$. The size of $V$ is the \emph{order} of the $\STS$.
\end{definition}
We will call the vertices and hyperedges of a $\STS$ \emph{points} and \emph{blocks}, respectively. For the game played on a Steiner triple system, if two out of three points of a block have been played, the last point is unplayable. 

As play proceeds in a $\STS$, or indeed in any hypergraph, and more points have been played and more points become unplayable, eventually the combinatorial structure which represents the remaining playable points can be described by a graph, and the game becomes equivalent to \nodekayles{} on that graph.  In \cite{HugganHS2021} we proved that for any graph $G$ and any $v \equiv 1,3 \pmod 6$ sufficiently large, there exists a $\STS(v)$ and a line of play of \nofil{} that results in the exact equivalent game as \nodekayles{} played on $G$.  Using this embedding theorem and the fact that determining the winner of \nodekayles{} on graphs is PSPACE-complete \cite{schaefer_78} we showed that determining the winner of a partially played game of \nofil{} on Steiner triple systems is PSPACE-complete under randomized reductions \cite{HugganHS2021}.   

Our embedding theorem produces Steiner triple systems that are, in general, much larger than the embedded graph \cite{HugganHS2021}.  In this paper, we investigate the feasibility of minimally embedding some graphs in Steiner triple systems.  In \cref{sec_background} we give some background on \nofil{} and how play eventually becomes equivalent to playing \nodekayles{} on a graph.  We also describe the combinatorics of partitioning the points of the $\STS$ into Played, Available, and Unplayable points.  Cardinality restrictions between these sets are also reviewed.  In \cref{skolem_sequences} we present the definition of Skolem sequences which we will need in our constructions. In \cref{sec_constructions} we construct minimal embeddings of complete graphs and embeddings of stars that are either minimal or attain the second or third smallest admissible order.  In \cref{sec_other} we describe the results of experiments on paths, cycles, and empty graphs and discuss the challenges of finding minimal embeddings for these graph families.

\section{Background}\label{sec_background}

At any point of game play in \nofil{} the points can be partitioned into three groups:
\begin{itemize}
\item \textbf{Played}: Those that have been chosen already. We denote this set by $P$.  The rules of the game imply that for every block $B$ in the design, $B \not\subseteq P$.
\item \textbf{Unplayable}: If all points but one in a block have been played, the last point is unplayable.  We denote the set of such points by $U$. 
\item \textbf{Available}: The set of unplayed points that are still playable is denoted $A$.
\end{itemize}
In \cref{sec_game_play} we will describe how these  point sets evolve as play proceeds. In \cref{sec_restrictions} we describe constraints of the relative cardinality of these sets.

\subsection{Process of play in \nofil{}} \label{sec_game_play}
On each turn, the set of played points, $P$, increases by one. The remaining points can be partitioned between those that are unplayable, $U$, because they appear on a triple with two already played points, and those that are available, $A$, which have at most one played point in every block on which they appear. If we concentrate on just the playable points, we can restrict our attention to the induced hypergraph on $A$, inherited from the $\STS$ and called the {\em available hypergraph}, which encodes the available plays and any restrictions among points. 
At any point in the game a hyperedge either contains unplayable points or not. If a hyperedge has unplayable points, then it imposes no restriction on its other points. This is because with an unplayable point, it could never become filled with played points. 
Thus, this hyperedge does not appear in the available hypergraph.  In the second case, a hyperedge without any unplayable points still constrains the play on the remaining available points of the hyperedge because it is forbidden to fill this hyperedge with played points.  This hyperedge, with the played points removed, is a hyperedge of the available hypergraph. 

In the research literature on \nodekayles{} a version of this available hypergraph is used to simplify play \cite{fleischer_06}:  deleting any played vertex and all its neighbours is the equivalent of the process described above.  In \cref{ex: example} we give an example of game play, these  point sets, and the available hypergraph on a $\STS(9)$. 

\begin{example}\label{ex: example}
Consider a $\STS(9)$ with $V = \{1, 2, 3, 4, 5, 6, 7, 8, 9\}$ and blocks
\[\mathcal{B}=\{123, 456, 789, 147, 258,  369, 159, 267, 348, 168, 249, 357\}.\] 

A game on this $\STS(9)$ could play out as summarized in \cref{ex: steiner 9}, where the columns $P$, $A$, and $U$ give the respective disjoint sets of points described above. 
The \emph{blocks} column shows the list of blocks of the design, where points within a block are overlined if they have been played. No block is allowed to have all three of its points overlined. The \emph{available hypergraph} column describes the restrictions of points within blocks and at Turn 0 matches the $\STS(9)$ itself. In this column, ``$23$'', for example, indicates that points $2$ and $3$ are on a block together, and both cannot be chosen because the third point in the block has been played. Consider a block $348$ and suppose $3$ becomes unplayable. Then on the next turn, the $48$ hyperedge is removed, because this block imposes no further restriction on play of either $4$ or $8$, as the block can never be completed. The last column, \emph{hypergraph figure} presents the modification to the $\STS(9)$ throughout the example game play. The greyed out vertices represent unplayable points. The blacked out vertices represents the played points. All games played on the unique $\STS(9)$ are equivalent to this sequence of play, because the automorphism group is 3-transitive. After turn 4, the game is over and Player 2 has won the game. 
\renewcommand{\arraystretch}{1.5}
\begin{table}[ht!]
\begin{center}
\begin{tabular}{|c|c|c|c|c|c|c|}\hline
\multirow{2}{*}{Turn}&\multirow{2}{*}{$P$}&\multirow{2}{*}{$A$}&\multirow{2}{*}{$U$}&\multirow{2}{*}{Blocks}&Available&Hypergraph\\
&&&&& Hypergraph&Figure\\\hline\hline
 \multirow{4}{*}{$0$}&&\multirow{4}{*}{\begin{minipage}[c]{1.5cm}
        \begin{tabular}{c}
            {$1,2,3,$} \\
            {$4,5,6,$} \\
            {$7,8,9$} \\
        \end{tabular}
    \end{minipage}}&&$123, 456, 789,$ &$123, 456, 789,$&\multirow{4}{*}{\includegraphics[width=25mm, height=25mm]{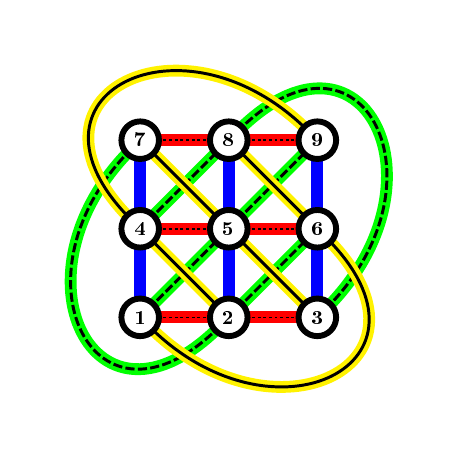}}
\\
&&&&$ 147, 258, 357,$&$ 147, 258, 357,$&\\
    &&&& $369, 159, 267,$&$369, 159, 267,$&\\
    &&&& $348, 168, 249$&$ 348, 168, 249$&\\\hline
 \multirow{4}{*}{$1$}&\multirow{4}{*}{$1$}&&&$\overline{1}23, 456, 789,$ &$23, 456, 789,$&\multirow{4}{*}{\includegraphics[width=25mm, height=25mm]{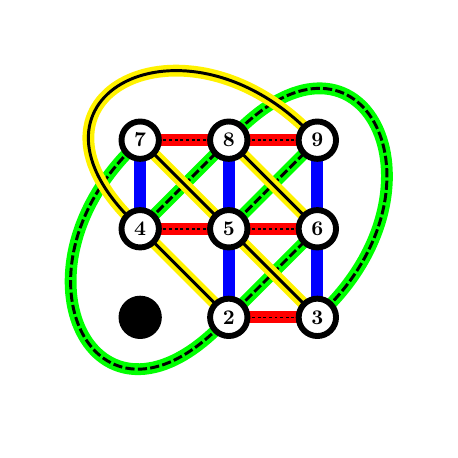}}
\\
&&$2, 3, 4, 5, $&&$ \overline{1}47, 258, 357,$&$ 47, 258, 357,$&\\
    &&$6, 7, 8, 9$&& $369, \overline{1}59, 267,$&$369, 59, 267,$&\\
    &&&& $348, \overline{1}68, 249$&$ 348, 68, 249$&\\\hline
 \multirow{4}{*}{$2$}&\multirow{4}{*}{$1, 2$}&&\multirow{4}{*}{$3$}&$\overline{1}\overline{2}3, 456, 789, $ &$456, 789,$&\multirow{4}{*}{\includegraphics[width=25mm, height=25mm]{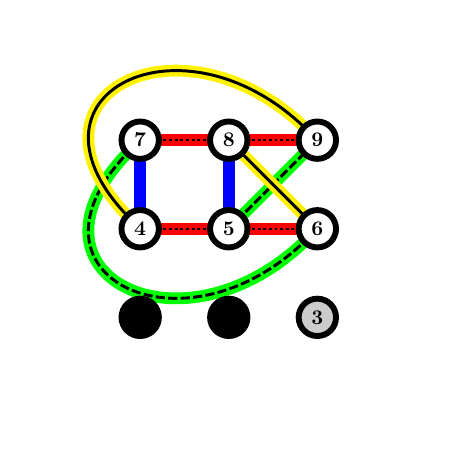}}\\
&&$4, 5, 6,$&&$\overline{1}47, \overline{2}58, 357,$& $ 47, 58,$&\\
    && 7, 8, 9&& $369, \overline{1}59, \overline{2}67,$&$59, 67,$& \\
&&&&$ 348, \overline{1}68, \overline{2}49$ &$ 68, 49$&\\\hline  
\multirow{4}{*}{$3$}&\multirow{4}{*}{$1, 2, 6$}&\multirow{4}{*}{$4, 5, 9$}&\multirow{4}{*}{$3, 7, 8$}&$\overline{1}\overline{2}3, 45\overline{6}, 789, $&\multirow{4}{*}{ $45, 59, 49$}&\multirow{4}{*}{\includegraphics[width=25mm, height=25mm]{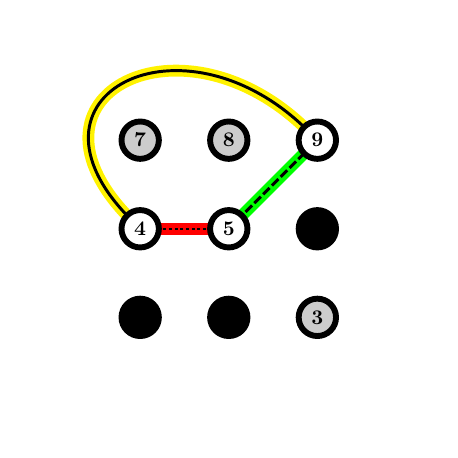}}\\
&&&&$\overline{1}47, \overline{2}58, 357,$ &&\\
    &&&& $3\overline{6}9, \overline{1}59, \overline{2}\overline{6}7,$&&\\
&&&&$ 348, \overline{1}\overline{6}8, \overline{2}49$&&\\\hline  
\multirow{4}{*}{$4$}&\multirow{4}{*}{$1, 2, 6, 4$}&&\multirow{4}{*}{$3, 7, 8, 5, 9$}&$\overline{1}\overline{2}3, \overline{4}5\overline{6}, 789,$ &&\multirow{4}{*}{\includegraphics[width=25mm, height=25mm]{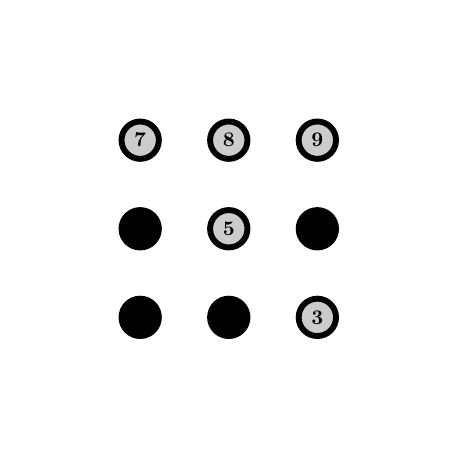}} \\
&&&&$ \overline{1}\overline{4}7, \overline{2}58, 357,$&&\\
&&&& $3\overline{6}9, \overline{1}59, \overline{2}\overline{6}7,$&& \\
&&&&$ 3\overline{4}8, \overline{1}\overline{6}8, \overline{2}\overline{4}9$&&\\\hline
\end{tabular}
\caption{Example of game play on a $\STS(9)$.}\label{ex: steiner 9}
\end{center}
\end{table}
\renewcommand{\arraystretch}{1}
\end{example}

\nofil{} ends when $A = \emptyset$ and the available hypergraph is empty.  Since an empty hypergraph is also a graph, there is a point in every game play when the available hypergraph on $A$ is simply a graph, possibly one with no edges or no vertices. In the above example, this first happens at turn $3$ where the available hypergraph is the graph $K_{3}$ with vertices labeled as $4$, $5$, and $9$. From this point forward the players are effectively playing \nodekayles{} on $K_3$.  Different lines of play in a Steiner triple system may leave different graphs on which \nodekayles{} is played.  For each available hypergraph which is a graph that can be reached in the play of \nofil{} on a $\STS$, we say this graphs is {\em embedded} in the $\STS$.  In this paper we investigate embedding graphs $K_a$, $K_{1,a-1}$, $\overline{K_a}$, $P_a$, and $C_a$ and are interested in finding the smallest Steiner triple systems into which they can be embedded.

\subsection{Restrictions}\label{sec_restrictions}

If $G$ is a graph on  point set $A$, then the following constraints apply when embedding it into a $\STS$:
\begin{itemize}
    \item every point in $U$ must be in some block with two points from $P$;
    \item for each edge $xy \in E(G)$, there must be a block $xyp$ with $p \in P$;
    \item for each non-edge $xy \not \in E(G)$, there must be a block $xyu$ with $u \in U$.
\end{itemize}
If a $\STS$ has a partition of its points into $P$, $A$, and $U$, then satisfying these three constraints is equivalent to $G$ being embedded in the $\STS$.  In our constructions we classify each block of the $\STS$ according to which of $P$, $A$, and $U$ its three points belong.  Recall that every pair of points is on a unique block of the Steiner triple system.  Let
\begin{align*}
  PPU &= \{ B: |B\cap P|=2, |B\cap U| = 1\}, \\
  PAA &= \{ B: |B\cap P|=1, |B\cap A| = 2\}, 
\end{align*}
and define $PAU$, $PUU$, $AAU$, $AUU$, and $UUU$ analogously. The rules of the game forbid any blocks of form $PPP$ or $PPA$.  Since we are embedding graphs whose edges have size two, there will be no blocks of the form $AAA$. Considerations of these facts and letting $p = |P|$, $a = |A|$, and $u = |U|$ we derived the following bounds on $u$ which can be used to determine a lower bound on $v=p+a+u$ of an embedding into a $\STS(v)$ \cite{HugganHS2021}.

\begin{lemma}[\cite{HugganHS2021}] \label{u_bounds}
Let $(X,\mathcal{B})$ be a $\STS(v)$ with points $P$ played.  Suppose the available hypergraph is the graph $G$.  Then
\begin{align}
u &\leq v-a-\chi'(G),    \label{chi_g} \\
  u &\geq \chi'(\overline{G}), \label{chi_comp_g} \\
  u &\geq \frac{v-a-1}{2}, \label{chi_K_p} \\
u &\geq \frac{a(v-1) -4e}{2a},  \label{auu} \\
u &\geq \frac{3v-2a-1 - \sqrt{(v-2a+1)^2 + 16e}}{4}, \label{puu_1} \\
u &\leq \frac{3v-2a-1 + \sqrt{(v-2a+1)^2 + 16e}}{4},    \label{puu_2} \\
  u &\leq v-a - \frac{2e}{a},  \label{pau} \\
  u &\leq v-a + \frac{1-\sqrt{8(v-a)+1}}{2},  \label{u_binom_p_2} 
\end{align}
and if $v^2-4v \leq 24e$, then either
\begin{equation}\label{uuu_1}
u \geq \frac{v}{2} + \frac{\sqrt{72e-3v^2+12v}}{6}
\end{equation}
or
\begin{equation}\label{uuu_2}
u \leq \frac{v}{2} - \frac{\sqrt{72e-3v^2+12v}}{6}.
\end{equation}

\end{lemma}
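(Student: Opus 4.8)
The plan is to classify every block of the $\STS$ by how many of its three points lie in $P$, in $A$, and in $U$. The rules forbid $PPP$ and $PPA$ blocks, and because the available hypergraph is the graph $G$ there are no $AAA$ blocks, so the only types that occur are $PPU$, $PAA$, $PAU$, $PUU$, $AAU$, $AUU$, and $UUU$. Using that each pair of points of the $\STS$ lies in exactly one block, I would first pin down the type counts. The three embedding constraints from \cref{sec_restrictions} give bijections: every pair inside $P$ lies on a $PPU$ block and conversely each such block meets $P$ in exactly one pair, so $|PPU|=\binom p2$; the $PAA$ blocks are exactly the blocks witnessing edges of $G$, so $|PAA|=e$; and the $AAU$ blocks witness non-edges of $G$ inside $A$, so $|AAU|=\binom a2-e$. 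Counting the $pa$ pairs of type $(P,A)$ against the block types that contain such a pair gives $2|PAA|+|PAU|=pa$, hence $|PAU|=pa-2e$; similarly counting $(P,U)$ pairs and $(A,U)$ pairs and substituting $p=v-a-u$ yields $2|PUU|=p(2u-v+1)+2e$ and $2|AUU|=a(2u-v+1)+4e$. Finally counting $(U,U)$ pairs gives $|PUU|+|AUU|+3|UUU|=\binom u2$.

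With these identities in hand, most of the bounds are consequences of the nonnegativity of a block count. Nonnegativity of $|PAU|$ gives $pa\ge 2e$, which rearranges to \eqref{pau}; nonnegativity of $|AUU|$ gives \eqref{auu}. Nonnegativity of $|PUU|$, after clearing fractions and substituting $p=v-a-u$, becomes the quadratic inequality $2u^2-(3v-2a-1)u+(v-a)(v-1)-2e\le 0$ in $u$; solving it gives \eqref{puu_1} and \eqref{puu_2}, the main point being the (pleasant) simplification of the discriminant to $(v-2a+1)^2+16e$. For \eqref{uuu_1}--\eqref{uuu_2} I would use $|UUU|\ge 0$ together with $|PUU|+|AUU|=\tfrac12\bigl((v-u)(2u-v+1)+6e\bigr)$ to obtain $3u^2-3uv+v^2-v-6e\ge 0$; this quadratic in $u$ has real roots exactly when its discriminant $72e-3v^2+12v$ is nonnegative, i.e.\ when $v^2-4v\le 24e$, and in that range the inequality forces $u$ outside the root interval, giving the stated dichotomy. (When $v^2-4v>24e$ the quadratic is positive for all $u$, so no constraint arises, matching the hypothesis in the statement.)

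The four remaining bounds rest on one elementary observation about a $\STS$: two distinct blocks meet in at most one point, so any two blocks sharing points $x$ and $y$ coincide. Consequently, sending each edge $xy$ of $G$ to the unique point of $P$ on the block through $\{x,y\}$ sends adjacent edges to distinct points, i.e.\ it is a proper edge colouring of $G$ with colour set $P$; hence $\chi'(G)\le p=v-a-u$, which is \eqref{chi_g}. The same map applied to non-edges of $G$ (using $AAU$ blocks) properly edge-colours $\overline G$ with colour set $U$, giving \eqref{chi_comp_g}; applied to pairs inside $P$ (using $PPU$ blocks) it properly edge-colours $K_p$ with colour set $U$, so $u\ge\chi'(K_p)\ge p-1$, which rearranges to \eqref{chi_K_p}. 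Finally, each point of $U$ lies on some $PPU$ block, and two $PPU$ blocks through different points of $U$ cannot share their pair from $P$; this gives an injection from $U$ into the pairs of $P$, so $u\le\binom p2$, and rewriting this with $p=v-a-u$ and solving for $u$ produces \eqref{u_binom_p_2}.

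I expect the only real work to be the algebra in \eqref{puu_1}--\eqref{puu_2} and \eqref{uuu_1}--\eqref{uuu_2} — in particular confirming the discriminant simplifications and checking that the $|UUU|\ge 0$ argument contributes nothing when $v^2-4v>24e$ — together with remembering to use only the parity-free bound $\chi'(K_p)\ge p-1$ rather than the exact value of $\chi'(K_p)$.
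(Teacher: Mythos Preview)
Your argument is correct. The paper does not itself prove this lemma; it is quoted from \cite{HugganHS2021} and simply used throughout. That said, the paper's later remarks in \cref{path_experiments} confirm that your derivations are the intended ones: it states explicitly that \eqref{puu_1}--\eqref{puu_2} ``derive from the fact that $|PUU|\ge 0$'' and that \eqref{u_binom_p_2} ``is equivalent to $u\le\binom{p}{2}$'', which is exactly what you obtained. Your block-type counting (in particular $|PPU|=\binom{p}{2}$, $|PAA|=e$, $|AAU|=\binom{a}{2}-e$, $|PAU|=pa-2e$, $2|PUU|=p(2u-v+1)+2e$, $2|AUU|=a(2u-v+1)+4e$), the discriminant computations, and the edge-colouring arguments for \eqref{chi_g}--\eqref{chi_K_p} and the injection giving \eqref{u_binom_p_2} are all sound and match the setup the paper lays out in \cref{sec_restrictions}.
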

We will use these bounds in addition to the fact that $p$, $a$, and $u$ must be non-negative integers to establish how close to minimal the $v$ are in our constructions of embeddings into $\STS(v)$.

\subsection{Skolem sequences}\label{skolem_sequences}

Several of our constructions will take advantage of the existence of Skolem sequences and variants thereof.
\begin{definition}
  A {\em Skolem sequence of order $t$} is a collection of $t$ pairs $(a_r,b_r)$, where $1 \leq r \leq t$, such that $b_r-a_r =r$ for all $1 \leq r \leq t$ and $\bigcup_{1 \leq r \leq t} \{a_r,b_r\} = \{1,2,\ldots,2t\}$.   A {\em hooked Skolem sequence of order $t$} is a collection of $t$ pairs $(a_r,b_r)$, where $1 \leq r \leq t$, such that $b_r-a_r =r$ for all $1 \leq r \leq t$ and $\bigcup_{1 \leq r \leq t} \{a_r,b_r\} = \{1,2,\ldots,2t-1,2t+1\}$.  A {\em split Skolem sequence of order $t$} is a collection of  $t$ pairs $(a_r,b_r)$, $1 \leq r \leq t$ such that $b_r-a_r =r$ for all $1 \leq r \leq t$ and $\bigcup_{1 \leq r \leq t} \{a_r,b_r\} = \{1,2,\ldots,t,t+2,\ldots,2t+1\}$.   A {\em Langford sequence of order $t$ and defect $d$} is a collection of $t$ pairs $(a_r,b_r)$ where $d \leq r < t+d$, such that $b_r-a_r = r$ and $\bigcup_{1 \leq r \leq t} \{a_r,b_r\} = \{1,2,\ldots,2t\}$.  A {\em hooked Langford sequence of order $t$ and defect $d$} is a collection of $t$ pairs $(a_r,b_r)$ where $d \leq r < t+d$, such that $b_r-a_r = r$ and $\bigcup_{1 \leq r \leq t} \{a_r,b_r\} = \{1,2,\ldots,2t-1,2t+1\}$. 
\end{definition}

\begin{example}
The collection of pairs
\[
  \{(1,2),(5,7),(3,6),(4,8)\}
  \]
forms a Skolem sequence of order 4. An example of a hooked Skolem sequence of order $6$ is 
  \begin{equation} \label{hook_6}
\{ (9,10),(1,3),(4,7),(2,6),(8,13),(5,11)\}
  \end{equation}
The pairs
  \[
    \{(1,2),(7,9),(3,6),(4,8) \}
    \]
form a split Skolem sequence of order $4$.  And the pairs 
\[
\{(4,6),(8,11),(1,5),(2,7),(3,9)\}
\]
form a hooked Langford sequence of order 5 and defect 2.
\end{example}

Necessary and sufficient conditions for the existence of Skolem sequences, hooked Skolem sequences, split Skolem sequences and Langford sequences were proven in 1957 by Skolem, in 1961 by O'Keefe, in 1966 by Rosa, and in 1983 by Simpson, respectively.

\begin{theorem}
\begin{enumerate}
\item \cite{MR0092797} There exists a Skolem sequence of order $t$ if and only if $t \equiv 0,1 \pmod 4$.
\item \cite{MR125024} There exists a hooked Skolem sequence of order $t$ if and only if $t \equiv 2,3 \pmod 4$.
\item \cite{MR211884} There exists a split Skolem sequence of order $t$ if and only if $t \equiv 0,3 \pmod 4$.
\item \cite{simpson} There exist a Langford sequence of order $t$ and defect $d$ if and only if $t \geq 2d-1$ and 
\[
t \equiv \begin{cases} 0,1 \pmod 4 & \mbox{ if }d \equiv 1 \pmod 2 \\ 0,3 \pmod 4 & \mbox{ if }d \equiv 0 \pmod 2.\end{cases}
\]
\item \cite{simpson} There exist a hooked Langford sequence of order $t$ and defect $d$ if and only if $t(t - 2d + 1) \geq -2$ and 
\[
t \equiv \begin{cases} 2,3 \pmod 4 & \mbox{ if }d \equiv 1 \pmod 2 \\ 1,2 \pmod 4 & \mbox{ if }d \equiv 0 \pmod 2.\end{cases}
\]
\end{enumerate}
\end{theorem}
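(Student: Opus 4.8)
The plan is to prove each of the five parts in the same two steps: a parity obstruction for the ``only if'' direction and an explicit construction for the ``if'' direction. Since the five statements are minor variants of one another, almost all of the argument is carried out once for Skolem sequences and then adjusted for the hooked, split, and Langford versions.

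\textbf{Necessity.} In every case one is partitioning a set of $2t$ positions into $t$ pairs $(a_r,b_r)$ with prescribed differences, and the key is a parity count. Split the positions into their odd and even members. A pair with $b_r-a_r=r$ uses two positions of the same parity when $r$ is even and two positions of opposite parity when $r$ is odd. Let $m$ be the number of odd values among the differences that occur; for Skolem, hooked Skolem, and split Skolem this is $\lceil t/2\rceil$, and for the Langford families it is $\lceil t/2\rceil$ shifted according to $d\bmod 2$, which is exactly why those conditions bifurcate on the parity of $d$. The $m$ ``mixed'' pairs consume $m$ odd and $m$ even positions, while the remaining $t-m$ ``monochromatic'' pairs each consume two positions of a single parity; equating the number of odd positions consumed to the number actually available forces $t-m$ (or its shifted analogue) to be even, and reading off the residue of $t$ modulo $4$ gives precisely the stated congruences. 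The unused position—$2t$ for hooked sequences, $t+1$ for split sequences—changes the count of available odd positions by one and thereby flips the admissible residue classes, accounting for the different congruences in parts (2) and (3). For the Langford families one additionally needs $t\ge 2d-1$ (respectively $t(t-2d+1)\ge -2$): since every difference is at least $d$, the leftmost positions cannot be right-hand endpoints, so the pairs realising the smallest and largest differences are severely constrained in where they sit, and a short case analysis on their placement yields the inequality, with the handful of borderline tiny orders being what forces the product form rather than a clean linear bound.

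\textbf{Sufficiency.} Here one writes down explicit sequences. The index set—$\{1,\ldots,t\}$, or $\{d,\ldots,t+d-1\}$ in the Langford case—is partitioned into a bounded number of arithmetic-progression classes whose makeup depends only on $t\bmod 4$ (and, for Langford, also on $d\bmod 2$), and on each class the pairs $(a_r,b_r)$ are given by closed-form linear formulas. One then verifies two things: that the multiset of differences $b_r-a_r$ is exactly the required interval, and that $\bigcup_r\{a_r,b_r\}$ is exactly the prescribed set of positions. Both are finite, routine computations once the formulas are fixed. A fixed finite list of small orders not covered by the generic construction—and, for Langford, the borderline values permitted by the inequality—is handled by ad hoc sequences; examples like the hooked Langford sequence of order $5$ and defect $2$ above illustrate the kind of building block involved.

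The main obstacle is the sufficiency direction, and within it the Langford and hooked Langford cases: unlike plain Skolem sequences, these involve a genuinely two-parameter family of congruence classes and hence many subcases, and the formulas must be tuned so that the differences \emph{and} the positions come out right simultaneously; correctly isolating the exceptional small orders captured by $t(t-2d+1)\ge -2$ in the hooked Langford case is the most delicate bookkeeping. The parity obstructions, by contrast, are uniform and short. For the present paper it suffices to invoke the original constructions of Skolem, O'Keefe, Rosa, and Simpson as black boxes, since we use only the existence statements in the embeddings of later sections.
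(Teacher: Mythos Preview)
The paper does not prove this theorem at all: it is stated purely as a background result, with each part attributed to its original source (Skolem, O'Keefe, Rosa, Simpson) and no argument given. Your final sentence already anticipates this, and indeed that is exactly what the paper does---it invokes these existence results as black boxes.

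Your sketch therefore goes well beyond the paper. As a summary of the classical proofs it is essentially accurate: the necessity direction in all five cases is a parity/residue count of the kind you describe (equivalently, one can compute $\sum_r(a_r+b_r)$ two ways and reduce modulo $4$), and sufficiency is by explicit closed-form constructions split into cases by $t\bmod 4$, with a short list of sporadic small orders. Two minor caveats. First, your description of the Langford inequality $t\ge 2d-1$ as coming from ``a short case analysis on the placement of the extreme differences'' understates it slightly; the cleanest argument sums all the left endpoints and compares to the minimum possible, which yields the inequality in one line. Second, the phrase ``$\lceil t/2\rceil$ shifted according to $d\bmod 2$'' is correct in spirit but vague; for Langford sequences the number of odd differences in $\{d,\ldots,d+t-1\}$ depends on both $t\bmod 2$ and $d\bmod 2$, and writing this out carefully is what produces the four-way split in parts (4) and (5). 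Neither of these is a genuine gap, and for the purposes of this paper no proof is needed.
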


In two cases we will need Skolem sequences with specific properties.

\begin{lemma}\label{special_skolem}
    For all $1 \leq t \equiv 0,1 \pmod 4$ there exists a Skolem sequence of order $t$ with $b_1 = a_1 + 1 = 2$.
\end{lemma}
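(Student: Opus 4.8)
The plan is to reduce the statement to the existence of Langford sequences of defect $2$ and then quote Simpson's theorem (item~(4) of the theorem above). For $t=1$ the single pair $\{(1,2)\}$ is a Skolem sequence of order $1$ with $b_1=a_1+1=2$, so from now on we may assume $t\geq 4$ with $t\equiv 0,1\pmod 4$.

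The key observation is that a Skolem sequence of order $t$ with $a_1=1$ and $b_1=2$ exists if and only if a Langford sequence of order $t-1$ and defect $2$ exists. Indeed, given such a Skolem sequence, deleting the pair $(1,2)$ leaves $t-1$ pairs with differences $2,3,\ldots,t$ whose entries partition $\{3,4,\ldots,2t\}$; applying the translation $x\mapsto x-2$, which preserves every difference, yields $t-1$ pairs with differences $2,3,\ldots,t$ partitioning $\{1,2,\ldots,2(t-1)\}$, which is exactly a Langford sequence of order $t-1$ and defect $2$. This correspondence is reversible: starting from a Langford sequence of order $t-1$ and defect $2$, translating all of its entries by $+2$ and then adjoining the pair $(1,2)$ of difference $1$ produces a Skolem sequence of order $t$ with the required initial pair.

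It then remains to verify the Langford sequence exists for the relevant parameters. Since the defect $d=2$ is even, Simpson's theorem gives a Langford sequence of order $n$ and defect $2$ precisely when $n\geq 2d-1=3$ and $n\equiv 0,3\pmod 4$. Taking $n=t-1$, the condition $n\geq 3$ becomes $t\geq 4$, the condition $n\equiv 0\pmod 4$ becomes $t\equiv 1\pmod 4$, and $n\equiv 3\pmod 4$ becomes $t\equiv 0\pmod 4$. Hence for every $t\geq 4$ with $t\equiv 0,1\pmod 4$ the required Langford sequence exists, and therefore so does the required Skolem sequence; combined with the case $t=1$ this proves the lemma.

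I do not expect a genuine obstacle here: the one thing to get right is the indexing bookkeeping, namely that a Langford sequence of order $n$ and defect $2$ uses differences $2,3,\ldots,n+1$ and occupies exactly $\{1,\ldots,2n\}$, so that the translation argument and the translation of congruence conditions line up correctly. All of the existence content is supplied by Simpson's theorem.
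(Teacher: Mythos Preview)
Your proof is correct and follows exactly the paper's approach: handle $t=1$ directly, and for $t\geq 4$ adjoin the pair $(1,2)$ to a Langford sequence of order $t-1$ and defect $2$ shifted up by $2$. You are in fact more explicit than the paper in verifying that Simpson's conditions are met for $n=t-1$, $d=2$.
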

\begin{proof}
    If $t=1$, then the Skolem sequence is trivial: $\{(1,2)\}$.  Otherwise for $4 \leq t$ the desired sequence is constructed by adjoining the pair $(1,2)$ to a Langford sequence of order $t-1$ and defect 2 that has been shifted up by $2$. 
\end{proof}

\begin{lemma}\label{special_hooked}
    For all $6 \leq t \equiv 2,3 \pmod 4$ there exists a hooked Skolem sequence of order $t$ with $b_2 = a_2 + 2= 3$.
\end{lemma}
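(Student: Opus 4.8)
The plan is to follow the strategy used for \cref{special_skolem}: keep the difference-$2$ pair at the bottom of the sequence inside a small fixed Skolem block, and cover the larger differences with a translated hooked Langford sequence.

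Concretely, I would start from the order-$4$ Skolem sequence $\{(1,3),(2,5),(4,8),(6,7)\}$, which occupies the positions $\{1,\dots,8\}$, realizes each of the differences $1,2,3,4$ once, and has difference-$2$ pair $(1,3)$, so that $b_2=a_2+2=3$. For $t\ge 14$ with $t\equiv 2,3\pmod 4$, I would adjoin to this block a hooked Langford sequence of order $t-4$ and defect $5$, with every entry increased by $8$. The shifted sequence realizes exactly the differences $5,6,\dots,t$ and occupies the positions $\{9,\dots,2t-1,2t+1\}$, so together with the fixed block it yields $t$ pairs realizing the differences $1,\dots,t$ on the set $\{1,\dots,2t-1,2t+1\}$, i.e.\ a hooked Skolem sequence of order $t$, still with difference-$2$ pair $(1,3)$.

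The verification that matters is that this hooked Langford sequence exists. By the characterization recalled above, a hooked Langford sequence of order $t-4$ and defect $5$ exists if and only if $(t-4)\bigl((t-4)-2\cdot 5+1\bigr)\ge -2$ and, since the defect $5$ is odd, $t-4\equiv 2,3\pmod 4$. The congruence is equivalent to $t\equiv 2,3\pmod 4$, and the inequality $(t-4)(t-13)\ge -2$ holds for all $t\ge 14$; so the construction settles every $t\ge 14$ in the required residue classes. The remaining orders are $t\in\{6,7,10,11\}$ --- exactly those for which the defect-$5$ hooked Langford sequence does not exist --- and I would handle these by exhibiting one suitable sequence each: for $t=6$ the sequence in \eqref{hook_6} already has difference-$2$ pair $(1,3)$, and for $t=7,10,11$ a short search gives explicit hooked Skolem sequences with $(a_2,b_2)=(1,3)$, for example $\{(1,3),(2,5),(4,9),(6,10),(7,13),(8,15),(11,12)\}$ for $t=7$.

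I expect the small cases to be the real obstacle rather than the general construction. The fixed block is forced to carry four differences: it must contain the difference-$2$ pair on positions $1$ and $3$, and there is no Skolem sequence of order $2$ or $3$, so the smallest usable block has order $4$; this forces defect $5$ in the Langford piece and hence leaves the orders below $14$ to be treated separately. One must also check carefully that the list $\{6,7,10,11\}$ is exactly the set of exceptions, i.e.\ that no other $t\equiv 2,3\pmod 4$ with $t\ge 6$ escapes the Langford construction.
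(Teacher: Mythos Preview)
Your proposal is correct and is essentially the same argument as the paper's: the paper also fixes the order-$4$ Skolem block $\{(6,7),(1,3),(2,5),(4,8)\}$ on positions $1,\dots,8$, adjoins a hooked Langford sequence of order $t-4$ and defect $5$ shifted up by $8$ for $t\ge 14$, and handles $t\in\{6,7,10,11\}$ by explicit examples. The only gap in your write-up is that you still owe the explicit sequences for $t=10$ and $t=11$; the paper supplies these directly.
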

\begin{proof}
For $t = 6$ the hooked Skolem sequence is given in \cref{hook_6}.  For $t=7$ the sequence is 
\[
\{ (5, 6), (1, 3), (10, 13), (8, 12), (2, 7), (9, 15), (4, 11) \},
\]
for $t=10$ it is 
\[
     \{(4, 5), (1, 3), (9, 12), (11, 15), (2, 7), (13, 19), (14, 21), (10, 18), (8, 17), (6, 16)\},
\]
and for $t=11$ it is
\[
     \{(4, 5), (1, 3), (6, 9), (17, 21), (14, 19), (10, 16), (8, 15), (12, 20), (2, 11), (13, 23), (7, 18)\}.
\]
For $13 \leq t \equiv 2,3 \pmod 4$ these can be made by taking the union of the Skolem sequence $\{(6,7),(1,3),(2,5),(4,8)\}$ and a hooked Langford sequence of order $t-4$ and defect $5$ that has been shifted to the up by 8.
\end{proof}

\section{Embedding graphs}\label{sec_constructions}

\cref{sec_restrictions} gave restrictions on the sizes of $P$, $A$, and $U$ and required structures of some of the blocks in any embedding of a given graph in a $\STS$. In this section we consider embeddings in a $\STS(v)$ where $v$ is at or close to the bounds from \cref{sec_restrictions}.  To test the bounds we are interested in families of graphs which are diverse in terms of edge densities, maximum degree, and edge chromatic numbers since these are relevant in the bounds.  In this section we successfully embed  $K_a$ minimally and $K_{1,a-1}$ minimally or near minimally.  Later in \cref{sec_other} we will describe explorations of empty graphs, paths, and cycles.

\subsection{Embedding Complete Graphs}\label{complete_graph}

In this section, we will give minimal embeddings for complete graphs.

\begin{theorem} \label{k_a_theorem}
The complete graph $K_a$ can be embedded in a $\STS(3a)$ if $a$ is odd and a $\STS(3a+1)$ if $a$ is even. In both cases these are the minimum orders possible to embed $K_a$.  
\end{theorem}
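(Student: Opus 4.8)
The plan is to prove the two directions separately: a lower bound showing that no smaller $\STS$ can work, and explicit constructions meeting it. I expect the lower bound to be a short counting argument, and the constructions — especially for even $a$ — to be the real work.

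For the \textbf{lower bound}, suppose $K_a$ (with $a\ge 3$) is embedded in an $\STS(v)$, giving a partition $V=P\sqcup A\sqcup U$ with $|A|=a$; recall $v\equiv 1,3\pmod 6$ and that $p,a,u$ are non-negative integers. I would first note that $K_a$ has no non-edge, so the third embedding condition of \cref{sec_restrictions} is vacuous, while the second condition forces every pair inside $A$ onto a block whose third point is played — that third point cannot lie in $A$ (the block would be a size-$3$ hyperedge of the available hypergraph $K_a$) nor in $U$ (then the pair would not be an edge). Hence there are exactly $\binom a2$ blocks of type $PAA$, and since two such blocks through a common point of $P$ are disjoint on $A$, at most $\lfloor a/2\rfloor$ pass through any point of $P$; this gives $p\lfloor a/2\rfloor\ge\binom a2$, i.e.\ $p\ge a$ for odd $a$ and $p\ge a-1$ for even $a$. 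The same counting with $P$ in place of $A$ — every pair inside $P$ lies on a $PPU$ block, as $PPP$ and $PPA$ are forbidden — yields $u\lfloor p/2\rfloor\ge\binom p2$, hence $u\ge p-1$ (this is also inequality \eqref{chi_K_p} of \cref{u_bounds}), improving to $u\ge p$ when $p$ is odd. For odd $a$ this gives $v=p+a+u\ge 3a-1$, and since $3a-1\equiv 2\pmod 6$ is not a valid order, $v\ge 3a$. For even $a$ it gives $v\ge 3a-3$; if $v$ were $3a-3$ then $p+u=2a-3$ together with $p\ge a-1$ and $u\ge p-1$ forces $p=a-1$ (so $p$ odd) and $u=a-2$, contradicting $u\ge p=a-1$; thus $v\ge 3a-2$, and since $3a-2,3a-1,3a\equiv 4,5,0\pmod 6$ we conclude $v\ge 3a+1$.

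For the \textbf{odd construction} I would use the Bose construction on $\Z_a\times\{1,2,3\}$ from the idempotent commutative quasigroup $i\circ j=2^{-1}(i+j)$ on $\Z_a$ (available since $a$ is odd): the blocks are $\{(i,1),(i,2),(i,3)\}$ for $i\in\Z_a$ together with $\{(i,k),(j,k),(i\circ j,k+1)\}$ for $i\ne j$, indices read mod $3$. Declare $P=\Z_a\times\{1\}$, $U=\Z_a\times\{2\}$, $A=\Z_a\times\{3\}$. One then checks: no block lies inside $P$, so the points of $P$ may be played in any order, reaching the state in which exactly $P$ is played; each $(m,2)$ lies on $\{(i,1),(j,1),(m,2)\}$ for some $i\ne j$ with $i\circ j=m$, so $U$ is precisely the unplayable set while every $(i,3)$ is still available; no block lies inside $A$, and each pair $\{(i,3),(j,3)\}$ lies on $\{(i,3),(j,3),(i\circ j,1)\}$ with its third point played; hence the available hypergraph on $A$ is exactly $K_a$ and all three embedding conditions hold, with $v=3a$.

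For the \textbf{even construction} I would take the classical $\STS(3a+1)$ built on $(\Z_a\times\{1,2,3\})\cup\{\infty\}$ from a half-idempotent commutative quasigroup of order $a$ — equivalently, from a Skolem sequence of order $a/2$ together with its hooked and split variants in the residues where the plain sequence does not exist — and again designate one copy as $P$, one as $A$, and the remaining copy together with $\infty$ as $U$, then verify the same three conditions. \textbf{The main obstacle} is making this last verification go through uniformly in $a$: one must arrange the design so that the designated copy really becomes the available graph $K_a$, the other copies split exactly into played and unplayable points, and $\infty$ lands in $U$, and the block-type bookkeeping of the Skolem-sequence construction has to be lined up with the three embedding conditions in every residue class — which is precisely what the specialized \cref{special_skolem,special_hooked} are built to guarantee. (Small values of $a$ are handled separately.)
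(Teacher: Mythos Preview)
Your lower bound and the odd-$a$ construction are correct and essentially match the paper's; your parity refinement $u\ge p$ when $p$ is odd is in fact a cleaner way to exclude $v=3a-3$ than the paper's appeal to inequalities~\eqref{uuu_1}--\eqref{uuu_2}, and your Bose construction is the same design as the paper's, just reindexed.

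The even-$a$ construction, however, has a genuine gap. In any half-idempotent commutative quasigroup construction of an $\STS(3a+1)$ on $(\Z_a\times\{1,2,3\})\cup\{\infty\}$, every block through $\infty$ has the form $\{\infty,(i,k),(i\circ i,k{+}1)\}$ for a non-idempotent $i$, so its two finite points always lie in \emph{different} levels. Consequently, if $P$ is a single level then $\infty$ is on no block with two played points; it is not unplayable, it stays in the available set, and the available hypergraph is not $K_a$ (it acquires an extra vertex with only $a/2$ neighbours). No assignment of the three levels to $P,A,U$ repairs this. Your invocation of \cref{special_skolem,special_hooked} is also misplaced: those lemmas are tailored to the \emph{star} embeddings in \cref{stars}, where they set up a four-block swap that moves a single point from $A$ to $U$; they are not used anywhere in the proof of \cref{k_a_theorem} and would not resolve the $\infty$ obstruction here.

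The paper's even-$a$ argument abandons the level picture entirely. With $|P|=|A|=a=2\ell$ and $|U|=a+1$, it builds the $\STS(3a+1)$ from three bespoke edge-decompositions into matchings: one of the complete graph on $P$ indexed by $U$ (producing the $PPU$ blocks), one of the complete graph on $A$ indexed by $P$ (producing $PAA$), and a near-$1$-factorization of the complete graph on $U$ (producing $AUU$ and the edges used in $PUU$). The decomposition on $P$ must have the two matchings $\mathcal P_{i,0},\mathcal P_{i,1}$ miss exactly the pair $p_{i,0},p_{i,1}$, and this is obtained from the holey Hamiltonian decomposition of Erzurumluo\u{g}lu and Rodger; the leftover cross-pairs are then covered by small explicit families $PAU$ and $PUU$. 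The substance of the even case is making the missed vertices of the three decompositions line up so that every pair of points is covered exactly once, and that alignment is the part your sketch does not supply.
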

\begin{proof}

First consider the case when $a$ is odd. Since $\chi'(K_a) = a$, \cref{chi_g} gives that $p \geq a$. \cref{chi_K_p} gives that $u \geq p-1 \geq a-1$, but no $\STS(3a-1)$ exists because $3a-1$ is even.  So an embedding of $K_a$ into a $\STS(3a)$ is minimal when $a$ is odd.

Let $V =  \mathbb{Z}_{a} \times \mathbb{Z}_3$, and define the blocksets
  \begin{align*}
    T_k &=   \{ \{(i,k), (j,k), ((i+j)/2,k-1)\} :\; i\neq j \in \mathbb{Z}_a\} \text{ for }k\in\mathbb{Z}_3,\text{ and } \\
    L &= \{ \{(i,0), (i,1), (i,2)\} :\; i \in \mathbb{Z}_a\}.
  \end{align*}
The blocks $L \cup \bigcup_{k \in \mathbb{Z}_3} T_k$ form a $\STS(3a)$ with point set $V$. Let 
\begin{align*}
    P &= \mathbb{Z}_{a} \times \{0\}, \\
    A &= \mathbb{Z}_{a} \times \{1\}, \\
    U &= \mathbb{Z}_{a} \times \{2\},
    \end{align*} partition $V$. We have that $AAU$, $PUU$, and $UUU$ are empty.  The $\binom{a}{2}$ blocks of $T_1$ are in $PAA$ and ensure that every pair of vertices from $A$ forms an edge of $K_a$.  The $\binom{a}{2}$ blocks of $T_0$ are in $PPU$ and guarantee that every vertex of $U$ is unplayable.  In turn, every pair of points from $P$ is on a block of $PPU$, so all the points of $A$ are playable.  The $\binom{a}{2}$ blocks of $T_2$ are in $AUU$ and the blocks of $L$ are in $PAU$.

Secondly, when $a = 2\ell$ is even, then $\chi'(K_a) = a-1$ and \cref{chi_g} implies that $p \geq a-1$.  \cref{chi_K_p} again gives that $u \geq p-1 \geq a-2$ and thus $v = p+a+u \geq 3a-3 \equiv 3 \pmod 6$. The graph $K_a$ has $e = a(a-1)/2$ edges and if $v = 3a-3$, we compute that $v^2 - 4v \leq 24e$ and thus one of \cref{uuu_1} or \cref{uuu_2} must hold.  However, neither holds when $u = a-2$.  Since $3a-3 \equiv 3 \pmod 6$, the next highest order for which a $\STS$ exists is $3a+1$.  Thus an embedding of $K_a$ into a $\STS(3a+1)$ is minimal when $a$ is even.

Let
\begin{align*}
P &= \{p_{i,j} :\; i \in \mathbb{Z}_{\ell}, j \in \{0,1\}\},\\
A &= \{a_{i,j} :\; i \in \mathbb{Z}_{\ell}, j \in \{0,1\}\},\text{ and}\\
U &= \{u_{i,j} :\; i \in \mathbb{Z}_{\ell}, j \in \{0,1\}\} \cup \{u_\infty\}.
\end{align*}
To construct an embedding of $K_a$ into a $\STS(3a+1)$ we need three edge decompositions of complete graphs on each of $P$, $A$, and $U$.

We first construct $\{\mathcal{P}_x\}_{x \in \mathbb{Z}_{\ell} \times \{0,1\} \cup \{\infty\}}$, an edge decomposition of the complete graph on $P$ into matchings such that $\mathcal{P}_{\infty} = \{p_{i,0},p_{i,1} :\; i \in \mathbb{Z}_{\ell}\}$ and $p_{i,0}$ and $p_{i,1}$ both have degree zero only in $\mathcal{P}_{i,0}$ and $\mathcal{P}_{i,1}$.  To construct this, let $K_{2^{\ell}}$ be the complete multipartite graph with $\ell$ parts of size 2.  Let the $i$th part be $\{(i,0),(i,1)\}$.  Erzurumluo\u{g}lu and Rodger have shown that $K_{2^{\ell}}$ can be decomposed into $\ell$ $(2\ell-2)$-cycles where the $i$th part is missed by exactly one $(2\ell-2)$-cycle, $C_i$ \cite{MR3322805}.  Decomposing each $(2\ell-2)$-cycle, $C_i$ into alternating edges gives the desired matchings, $\mathcal{P}_{i,0}$ and $\mathcal{P}_{i,1}$, each missing precisely $p_{i,0}$ and $p_{i,1}$.  We obtain $\mathcal{P}_{\infty}$ from the parts of the multipartite graph.

Next we construct $\{\mathcal{A}_y\}_{y \in \mathbb{Z}_{\ell} \times \{0,1\}}$, an edge decomposition of the complete graph on $A$ such that $\mathcal{A}_{i,0}$ is a perfect matching for all $i \in \mathbb{Z}_{\ell}$ and $\mathcal{A}_{i,1}$ is a matching with $a/2 -1$ edges in which $a_{i,0}$ and $a_{i,1}$ have degree zero.  This edge decomposition can be constructed in $\mathbb{Z}_{a}$ where
\[
\mathcal{A}_y = \begin{cases} \{\{i-k,i+k+1\} :\; 0 \leq k < a/2 \} & y=(i,0), 0 \leq i < a/2 \\ \{\{i-k,i+k\} :\; 1 \leq k < a/2 \} & y=(i,1), 0 \leq i < a/2, \end{cases}
\]
followed by a relabeling of the vertices.

Finally we construct $\{\mathcal{U}_z\}_{z \in \mathbb{Z}_{\ell} \times \{0,1\} \cup \{\infty\}}$, a near 1-factorization of the complete graph on $U$ such $u_z$ has degree zero in $\mathcal{U}_z$ and $\mathcal{U}_{\infty} = \{ \{u_{i,0}, u_{i,1}\}:\; i \in \mathbb{Z}_{\ell}\}$. This edge decomposition can be constructed on  point set $\mathbb{Z}_{a+1}$ where $\mathcal{U}_i = \{\{i-j,i+j\} :\; 1 \leq j \leq a/2\}$ followed by a relabeling of the points.

Using these partitions into matchings, we construct the blocks
\begin{align*}
  PPU & = \{ e \cup u_{\infty} :\; e \in \mathcal{P}_{\infty} \} 
 \cup \{e \cup u_{i,j} :\; e \in \mathcal{P}_{i,j}\}, \\
  PAA &= \{ e \cup p_{i,j} :\; e \in \mathcal{A}_{i,j} \}, \\
  PAU &= \{ \{p_{i,1}, a_{i,j}, u_{i,j}\} :\; i \in \mathbb{Z}_{\ell}, j \in \{0,1\}\},\\
  PUU &= \{ \{p_{i,0}, u_{i,0}, u_{i,1} \} :\; i \in \mathbb{Z}_{\ell} \}, \\
  AUU &= \{ a_{i,j} \cup e :\; e \in \mathcal{U}_{i,j}\}, \\
  AAU &= \emptyset, \\
  UUU &= \emptyset.
\end{align*}
These blocks all together form a $\STS(3a+1)$. All the edges within $P$ are contained in the blocks $PPU$ since all edges from $\{\mathcal{P}_x\}_{x \in \mathbb{Z}_{\ell} \times \{0,1\} \cup \{\infty\}}$ are used therein.  Similarly all edges within $A$ are used in the blocks $PAA$.  The edges in $\mathcal{U}_{\infty}$ are used in $PUU$ and the rest of the edges in the $\mathcal{U}_{i,j}$ are all used in $AUU$.  The blocks of $PAA$ contain all edges between $P$ and $A$ except $\{p_{i,1},a_{i,j}\}$ for $i \in \mathbb{Z}_{\ell}$ and $j \in \{0,1\}$ which are covered in blocks of $PAU$.   The blocks of $PPU$ use the edges $\{p_{i,j}, u_{\infty}\}$ and $\{p_{i,j}, u_{i',k}\}$ for all $i\neq i'  \in \mathbb{Z}_{\ell}$ and $j,k \in \{0,1\}$.  For the rest of the edges between $P$ and $U$, $\{p_{i,0},u_{i,k}\}$ appear in blocks of $PUU$ and $\{p_{i,1},u_{i,k}\}$ edges appear in blocks of $PAU$ for all $i \in \mathbb{Z}_{\ell}$. The blocks of $AUU$ contain all the edges between $A$ and $U$ except $\{a_{i,j},u_{i,j}\}$ because $u_{i,j}$ is the only point of zero degree in $\mathcal{U}_{i,j}$.  These missing pairs are covered by blocks of $PAU$. We have

\setlength{\arraycolsep}{2pt}
\[
  \begin{array}{ccccccccccccccc}
  |PPU|& +& |PAA|& + &|PAU|& + &|PUU|& +& |AAU|& +& |AUU|& +& |UUU|& =& \\
  \binom{a}{2} &+& \binom{a}{2} &+& a &+& \frac{a}{2} &+& 0 &+&  \frac{a^2}{2} &+& 0 &=& \frac{a(3a+1)}{2}
  \end{array}
\]
which is the number of blocks in a $\STS(3a+1)$.
\setlength{\arraycolsep}{6pt}

Finally we note that every point of $U$ is on a block from $PPU$ and is thus unplayable, no points of $A$ are on blocks with two played points and so they remain playable and all edges in $A$ are on a block from $PAA$ and are thus in the graph.
\end{proof}

\subsection{Embedding Stars}\label{stars}

In this section we embed stars $K_{1,a-1}$ in Steiner triple systems.  If $a = 2$, then $K_{1,1} = K_2$ and the minimal embedding is constructed in \cref{k_a_theorem}.  The edge chromatic number of $K_{1,a-1}$ is $a-1$ so \cref{chi_g} gives $p \geq a-1$. With $e=a-1$, \cref{auu} gives $u \geq p+a-3$ for $2 \leq a \leq 3$ and $u \geq p+a-4$ for $a \geq 4$.  These combined give that $v \geq 4a-6$.  This lower bound is general and does not depend on $v \equiv 1,3 \pmod 6$.  In our constructions below we will apply this additional constraint as appropriate to each modular class of $a$ that we are considering.

All of our constructions of $\STS$ with embedded stars are modifications of Cho's constructions of 4-rotational Steiner triple systems \cite{MR0677049}. Like Cho, we will present the point set of the STS as a set together with a group action on the set.  This group action defines orbits of the blocks and we present one block from each orbit.  All other blocks can be obtained by applying the group action to the blocks presented.

\begin{lemma}
When $a=4,6$, the stars $K_{1,3}$ and $K_{1,5}$ can be minimally embedded in a $\STS(19)$.  Otherwise, let $10 \leq a \equiv 0,4 \pmod 6$. Then $K_{1,a-1}$ can be embedded in a $\STS(4a-3)$. When $a \equiv 4 \pmod 6$ this embedding is minimal and when $a \equiv 0 \pmod 6$ this embedding is in a $\STS$ of the next smallest possible order.    
\end{lemma}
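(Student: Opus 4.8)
The plan is to separate the lower-bound claims from the constructions.

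\textbf{Lower bounds.} As recalled just before the statement, \cref{chi_g} and \cref{auu} already give $v\ge 4a-6$. When $a\equiv 4\pmod 6$ the three values $4a-6,4a-5,4a-4$ are $\equiv 4,5,0\pmod 6$, hence inadmissible, while $4a-3\equiv 1\pmod 6$; so $4a-3$ is the smallest order of any Steiner triple system that is at least $4a-6$, and once the construction below produces an embedding into $\STS(4a-3)$ it is automatically minimal. When $a\equiv 0\pmod 6$ the smallest admissible order $\ge 4a-6$ is $4a-5\equiv 1\pmod 6$ and the next one is $4a-3\equiv 3\pmod 6$; moreover $4a-5\equiv 3\pmod 4$, so that order carries no $4$-rotational system, which is why the construction aims at $4a-3$, the next smallest possible order. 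For $a=6$ the bound $v\ge 18$ together with the non-existence of $\STS(18)$ makes $19$ the smallest candidate, so an embedding into $\STS(19)$ is minimal. For $a=4$ I would rule out the orders $13$ and $15$ directly: with $|A|=4$ and $e=3$, \cref{auu} and \cref{puu_1} force $u=6,\ p=3$ when $v=13$ and force $u\ge 7,\ p\le 4$ when $v=15$, each of which contradicts \cref{u_binom_p_2} (which, rearranged, says exactly $u\le\binom{p}{2}$); since the only $\STS(v)$ with $10\le v<19$ have $v\in\{13,15\}$, the minimum for $K_{1,3}$ is $\STS(19)$.

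\textbf{Constructions.} For $10\le a\equiv 0,4\pmod 6$ the idea is to start from a $4$-rotational $\STS(4a-3)$ in the style of Cho (note $4a-3\equiv 1\pmod 4$, as such systems require), present it by a group action with one base block per orbit, and then exhibit a partition of its $4a-3$ points into $P,A,U$ with $|P|=a-1$, $|A|=a$, $|U|=2(a-1)$ that realizes $K_{1,a-1}$ as an embedded graph: the centre of the star is placed at the fixed point of the rotation, and $A$ consists of that point together with the $a-1$ leaves. The base blocks are taken from a Skolem-type sequence whose flavour (plain, hooked, or split) is chosen so that the residue of $4a-3$ modulo $6$ is realized, and in the places where one particular base block must be pinned down we feed in a special sequence supplied by \cref{special_skolem} or \cref{special_hooked}; the hypothesis $a\ge 10$ is exactly what keeps those sequences long enough to exist. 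I would then verify, working through the block orbits, the three conditions of \cref{sec_restrictions}: that every point of $U$ lies on a block with two points of $P$ (so it is unplayable), that every centre--leaf pair lies on a block with its third point in $P$ (so that pair is an edge), and that every leaf--leaf pair lies on a block with its third point in $U$ (so that pair is a non-edge); arranging the base blocks so that no block has two points of $P$ and a third point in $P\cup A$ guarantees there are no $PPP$ or $PPA$ blocks, so all of $A$ stays playable and the embedded graph is precisely $K_{1,a-1}$. For $a=4$ and $a=6$ the order $19$ is not $\equiv 1\pmod 4$, so no $4$-rotational system is available; there I would instead write down an explicit $\STS(19)$ together with a partition having $|A|=4$ in the first case and $|A|=6$ in the second (with $p$ then forced by the bounds of \cref{u_bounds}) and check the three conditions by hand.

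\textbf{Main obstacle.} The delicate part is engineering a single Skolem-type sequence that makes all three conditions hold simultaneously, and in particular that \emph{every} one of the $2(a-1)$ points of $U$ is covered by some block $\{p,p',u\}$ with $p,p'\in P$ — there are only $\binom{a-1}{2}$ such blocks, one per pair of points of $P$ — while no base-block orbit ever yields a forbidden $PPP$ or $PPA$ block. Where the third point of each of these blocks lands is governed by where the successive differences of the sequence fall relative to $P,A,U$, so meeting the covering requirement is what forces the particular flavour of sequence (and the particular special pair) for each residue of $a$ modulo $6$, and it is also why the small values $a=4,6$, for which the needed sequences would be too short, must be treated separately by an ad hoc $\STS(19)$.
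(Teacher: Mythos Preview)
Your plan matches the paper's approach almost exactly: the lower bounds are obtained from the congruence sieve on $v\ge 4a-6$ (with the extra elimination of $13$ and $15$ when $a=4$, which the paper defers to \cite{HugganHS2021} but you carry out correctly), the construction is a Cho-style $4$-rotational $\STS(4a-3)$ on $(\mathbb Z_{a-1}\times\{1,2,3,4\})\cup\{\infty\}$ with $P=\mathbb Z_{a-1}\times\{1\}$, $A=(\mathbb Z_{a-1}\times\{2\})\cup\{\infty\}$, $U=\mathbb Z_{a-1}\times\{3,4\}$ and the star centred at $\infty$, and the small cases $a=4,6$ are handled by explicit $\STS(19)$s.

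Two corrections. First, \cref{special_skolem} and \cref{special_hooked} are \emph{not} invoked in this lemma; they enter only in the next lemma (the $a\equiv 1,2,3\pmod 6$ case). Here the paper splits according to $a\bmod 24$ and uses, in the various subcases, an ordinary, hooked, or split Skolem sequence of order roughly $3t$ together with either a second short Skolem/hooked Skolem sequence of order $t-1$ or the base blocks of a cyclic $\STS(a-1)$; no ``pinned'' pair is needed. Second, your explanation for the ad hoc treatment of $a=4,6$ is off: it is not that no $4$-rotational $\STS(19)$ exists. The point is simply that the $\mathbb Z_{a-1}$-construction always lands on order $4(a-1)+1=4a-3$, which is $13$ (too small) when $a=4$ and $21$ (not minimal) when $a=6$; moreover for $a=6$ the relevant Skolem ingredient would have order $t-1=0$. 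Hence an explicit $\STS(19)$ is supplied instead.
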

\begin{proof}
When $a \equiv 0 \pmod 6$ the fact that $v \equiv 1,3 \pmod 6$ gives that $v \geq 4a-5$.  \cref{star_a_4} shows a minimal embedding of $K_{1,5}$ in a $\STS(19)$.  For $12 \leq a$, our construction does not construct an embedding in a smallest possible $\STS$ but it does give the next smallest possible order of a $\STS$: $v=4a-3$.  When $a \equiv 4 \pmod 6$, the fact that $v \equiv 1,3 \pmod 6$ gives that $v \geq 4a-3$.  When $a=4$, we get $e=3$ and embedding $K_{1,3}$ in a $\STS(13)$ or $\STS(15)$ is ruled out by a more in-depth analysis of the bounds \cite{HugganHS2021}.   Therefore the smallest possible triple system is a $\STS(19)$ and such an embedding is given in \cref{star_a_4}.  When $10 \leq a \equiv 4 \pmod 6$ we construct the minimal possible embedding of $K_{1,a-1}$ in a $\STS(4a-3)$.  

\renewcommand{\arraystretch}{1.5}
\begin{table}[ht]
\begin{center}
\scalebox{0.68}{
\begin{tabular}{|c|>{\centering\arraybackslash}p{10cm}|p{0.5cm}|c|>{\centering\arraybackslash}p{10cm}|} \cline{1-2} \cline{4-5}
 $P$ & $\overline{5}$, $\overline{6}$, $\overline{9}$, $\overline{11}$, $\overline{17}$ & & $P$ & $\overline{1}$, $\overline{2}$, $\overline{3}$, $\overline{8}$, $\overline{10}$, $\overline{13}$  \\ \cline{1-2} \cline{4-5}
 $A$ &  $\underline{2}$, $\underline{4}$, $\underline{13}$, $\underline{14}$, $\underline{16}$, $\underline{18}$ &&  $A$ &  $\underline{0}$, $\underline{9}$, $\underline{15}$, $\underline{17}$ \\\cline{1-2} \cline{4-5}
 $U$ & 0, 15, 7, 12, 10, 8, 1, 3 && $U$ & 4, 5, 6, 7, 11, 12, 14, 16, 18 \\\cline{1-2}\cline{4-5}
  \multirow{3}{*}{$PPU$} & \multirow{3}{*}{ \begin{tabular}{c} \{$\overline{5}$, $\overline{9}$, 0\}, \{$\overline{6}$, $\overline{11}$, 0\}, \{$\overline{5}$, $\overline{11}$, 3\}, \{$\overline{5}$, $\overline{6}$, 7\}, \{$\overline{5}$, $\overline{17}$, 1\},\\
   \{$\overline{9}$, $\overline{11}$, 8\},     \{$\overline{6}$, $\overline{9}$, 15\},     \{$\overline{9}$, $\overline{17}$, 7\},     \{$\overline{11}$, $\overline{17}$, 10\},     \{$\overline{6}$, $\overline{17}$, 12\} 
   \end{tabular}} & & \multirow{3}{*}{$PPU$} & 
  \{$\overline{3}$, $\overline{8}$, 11\}, \{$\overline{3}$, $\overline{13}$, 18\},     \{$\overline{3}$, $\overline{10}$, 7\},    \{$\overline{1}$, $\overline{3}$, 6\},    \{$\overline{2}$, $\overline{3}$, 5\},       \\
   && & &  \{$\overline{8}$, $\overline{13}$, 16\},     \{$\overline{2}$, $\overline{10}$, 16\},     \{$\overline{1}$, $\overline{8}$, 18\},     \{$\overline{8}$, $\overline{10}$, 4\},     \{$\overline{2}$, $\overline{8}$, 12\}, \\
  &&&  &  \{$\overline{1}$, $\overline{2}$, 4\},     \{$\overline{1}$, $\overline{10}$, 14\},     \{$\overline{1}$, $\overline{13}$, 12\},     \{$\overline{2}$, $\overline{13}$, 11\},    \{$\overline{10}$, $\overline{13}$, 5\} \\\cline{1-2} \cline{4-5}
  $PAA$ & \{$\overline{17}$, $\underline{13}$, $\underline{14}$\}, \{$\overline{5}$, $\underline{14}$, $\underline{18}$\}, \{$\overline{9}$, $\underline{4}$, $\underline{14}$\}, \{$\overline{11}$, $\underline{2}$, $\underline{14}$\}, \{$\overline{6}$, $\underline{14}$, $\underline{16}$\} && $PAA$ & \{$\overline{3}$, $\underline{15}$, $\underline{17}$\}, \{$\overline{2}$, $\underline{0}$, $\underline{15}$\}, \{$\overline{10}$, $\underline{9}$, $\underline{15}$\} \\\cline{1-2} \cline{4-5} 
  \multirow{4}{*}{$PAU$ }&\{ $\overline{5}$, $\underline{13}$, 10\},     \{$\overline{9}$, $\underline{13}$, 3\},     \{$\overline{11}$, $\underline{13}$, 15\},    \{$\overline{6}$, $\underline{13}$, 1\},    \{$\overline{17}$, $\underline{18}$, 0\}, && \multirow{4}{*}{$PAU$ }&
\{$\overline{3}$, $\underline{9}$, 16\},     \{$\overline{3}$, $\underline{0}$, 4\},    \{$\overline{1}$, $\underline{0}$, 16\},    \{$\overline{8}$, $\underline{0}$, 7\},    \{$\overline{8}$, $\underline{17}$, 14\},      \\
   &     \{$\overline{5}$, $\underline{2}$, 15\},    \{$\overline{5}$, $\underline{4}$, 8\},    \{$\overline{5}$, $\underline{16}$ , 12\},    \{$\overline{9}$, $\underline{2}$, 12\},    \{$\overline{9}$, $\underline{18}$, 1\}, &&&  \{$\overline{8}$, $\underline{9}$, 6\},    \{$\overline{8}$, $\underline{15}$, 5\},    \{$\overline{2}$, $\underline{9}$, 18\},    \{$\overline{13}$, $\underline{9}$, 4\},    \{$\overline{1}$, $\underline{9}$, 7\}, \\
  &     \{$\overline{9}$, $\underline{16}$, 10\},    \{$\overline{11}$, $\underline{18}$, 7\},    \{$\overline{11}$, $\underline{4}$, 12\},    \{$\overline{11}$, $\underline{16}$, 1\},    \{$\overline{17}$, $\underline{4}$, 15\}, &&&  \{$\overline{2}$, $\underline{17}$, 7\},    \{$\overline{13}$, $\underline{15}$, 7\},    \{$\overline{13}$, $\underline{0}$, 14\},    \{$\overline{10}$, $\underline{0}$, 12\},    \{$\overline{1}$, $\underline{17}$, 5\}, \\
&    \{$\overline{6}$, $\underline{4}$, 10\},    \{$\overline{17}$, $\underline{2}$, 8\},    \{$\overline{6}$, $\underline{2}$, 3\},    \{$\overline{6}$, $\underline{18}$, 8\},    \{$\overline{17}$, $\underline{16}$, 3\} &&& \{$\overline{1}$, $\underline{15}$, 11\},    \{$\overline{10}$, $\underline{17}$, 11\},    \{$\overline{13}$, $\underline{17}$, 6\} \\\cline{1-2} \cline{4-5}
  $PUU$ & $\emptyset$ && $PUU$ & \{$\overline{3}$, 12, 14\}, \{$\overline{10}$, 6, 18\}, \{$\overline{2}$, 6, 14\} \\ \cline{1-2} \cline{4-5}
  \multirow{2}{*}{$AAU$} & \{$\underline{4}$, $\underline{13}$, 0\},     \{$\underline{2}$, $\underline{13}$, 7\},    \{$\underline{13}$, $\underline{18}$, 12\},    \{$\underline{13}$, $\underline{16}$, 8\},    \{$\underline{2}$, $\underline{16}$, 0\}, && \multirow{2}{*}{$AAU$} & \multirow{2}{*} {\{$\underline{0}$, $\underline{17}$, 18\}, \{$\underline{9}$, $\underline{17}$, 12\}, \{$\underline{0}$, $\underline{9}$, 5\} }\\
&    \{$\underline{16}$, $\underline{18}$, 15\},     \{$\underline{4}$, $\underline{16}$, 7\},    \{$\underline{2}$, $\underline{4}$, 1\},    \{$\underline{4}$, $\underline{18}$, 3\},    \{$\underline{2}$, $\underline{18}$, 10\} && &\\\cline{1-2} \cline{4-5}
 \multirow{2}{*}{$AUU$} & \multirow{2}{*}{\{$\underline{14}$, 0, 8\}, \{$\underline{14}$, 3, 15\}, \{$\underline{14}$, 7, 10\}, \{$\underline{14}$, 1, 12\}} &&  \multirow{2}{*}{$AUU$} &\{$\underline{15}$, 16, 18\},    \{$\underline{17}$, 4, 16\},    \{$\underline{15}$, 4, 14\},\\
&&&&    \{$\underline{9}$, 11, 14\},     \{$\underline{15}$, 6, 12\},    \{$\underline{0}$, 6, 11\} \\\cline{1-2} \cline{4-5}
  \multirow{2}{*}{$UUU$}&   \{0, 1, 15\},    \{0, 7, 12\},    \{0, 3, 10\},    \{7, 8, 15\},    \{10, 12, 15\}, && \multirow{2}{*}{$UUU$}& \{7, 14, 16\},    \{11, 12, 16\},    \{5, 6, 16\},    \{4, 11, 18\},    \{7, 12, 18\},\\
&    \{1, 3, 7\},    \{3, 8, 12\},    \{1, 8, 10\} &&  &     \{5, 14, 18\},    \{4, 6, 7\},    \{4, 5, 12\},    \{5, 7, 11\} \\\cline{1-2} \cline{4-5}
  \end{tabular}}
  \end{center}
  \caption{$\STS(19)$s with $K_{1,5}$ and  $K_{1,3}$ embedded respectively. \label{star_a_4}}
\end{table}
\renewcommand{\arraystretch}{1}

The  point set of the $\STS(4a-3)$ will be 
  \[
    V = \left (\Z_{a-1} \times \{1,2,3,4\}\right ) \cup \{\infty\}.
  \]
  where the second elements in the Cartesian product, $\{1,2,3,4\}$,  are written as subscripts.  The element $g \in \Z_{a-1}$ acts on this set via $g(i_{j}) = (i+g)_j$ and $g(\infty) = \infty$. We let
  \begin{align*}
    P &= \Z_{a-1} \times \{1\}, \\
    A &= \left (\Z_{a-1} \times \{2\}\right ) \cup \{\infty\}, \\
    U &= \Z_{a-1} \times \{3,4\}.
  \end{align*}
We break the construction into cases of $a \pmod {24}$.  We will present the $\STS$ by giving one block per orbit of the action. 
\begin{itemize}
    \item   When $a \equiv 6 \pmod {24}$, let $a = 6t$. Since $a > 6$, we know $t>1$. Let $(a_r,b_r)$ be a hooked Skolem sequence of order $3t-1$.  Let $(c_r,d_r)$ be a Skolem sequence of order $t-1$. The base blocks of the $\STS(4a-3)$ are given in the top left of \cref{0_6}.
\item When $a \equiv 12 \pmod{24}$, let $a = 6t$. Let $(a_r,b_r)$ be a Skolem sequence of order $3t-1$. Let $(c_r,d_r)$ be a Skolem sequence of order $t-1$. The base blocks are shown in the top right of \cref{0_6}.

\item  When $a \equiv 0,18 \pmod{24}$, let $a= 6t$, so $t \geq 3$.  Let $(a_r,b_r)$ be a split Skolem sequence of order $3t-1$ and let $(c_r,d_r)$ be a hooked Skolem sequence of order $t-1$. The base blocks are given in the bottom of \cref{0_6}.

\renewcommand{\arraystretch}{1.5}
\begin{table}[ht]
  \begin{center}
    \scalebox{0.71}{
\begin{tabular}{cc}
\begin{tabular}{|c|>{\centering\arraybackslash}p{10cm}|}\hline
  \multirow{2}{*}{$PPU$} &   $\{0_1,r_1,(b_r)_4\},\; 1  \leq r \leq 3t-1,\; r \neq 4$ \\
  &     $\{0_1,4_1,2_3\}$ \\\hline
{$PAA$}&     $\{\infty,0_2,(-3)_1\},$  \\\hline
  \multirow{3}{*}{$PAU$}& $ \{(2r)_1,0_2,r_3\},\; 1 \leq r \leq 6t-4,\; r \neq 2,3t-2,3t-1$ \\
                         & $ \{0_1, 0_2,(-1)_4\}, \{(-2)_1,0_2,(3t-1)_3\}, \{(-1)_1,0_2,(-1)_3\}, $\\
                         & $ \{4_1,0_2,(b_4)_4\}, \{0_1,4_2,(b_4)_4\}$ \\\hline
$PUU$ & $ \{(-1)_1,0_3,(3t-1)_3\}$ \\\hline
  \multirow{2}{*}{$AAU$} &  $ \{0_2,r_2,(b_r)_4\},\; 1 \leq r \leq 3t-1,\; r \neq 4$ \\
  &  $\{0_2,4_2,2_3\}$  \\\hline
  {$AUU$} &    $ \{\infty,0_3,1_4\}, \{0_2,0_3,(3t-2)_3\}$ \\\hline
  \multirow{2}{*}{$UUU$}&    $ \{0_4,r_4,(b_r)_3\},\; 1 \leq r \leq 3t-1$ \\
  & $ \{0_3,r_3,(d_r+t-1)_3\},\; 1 \leq r \leq t-1 $  \\\hline
\end{tabular}
&
\begin{tabular}{|c|>{\centering\arraybackslash}p{10cm}|}\hline
  \multirow{2}{*}{$PPU$} &  $  \{0_1,r_1,(b_r)_4\},\; 1 \leq r \leq 3t-1,\; r \neq 4 $ \\
  &    $ \{0_1, 4_1, 2_3\}$ \\\hline
{$PAA$}& $ \{\infty,0_1, 3_2\}$  \\\hline
  \multirow{3}{*}{$PAU$}& $ \{0_1,(-2r)_2,(-r)_3\},\; 1 \leq r \leq 6t-4,\; r \neq 2,3t-2,3t-1$ \\
                         & $\{0_1,0_2,0_4\},     \{0_1,2_2,(3t+1)_3\},    \{0_1,1_2,0_3\}, $ \\
                         &  $\{4_1,0_2, (b_{4})_4\},    \{0_1,4_2, (b_{4})_4\} $\\\hline
$PUU$ &    $ \{(-1)_1,0_3,(3t-1)_3\}$ \\\hline
  \multirow{2}{*}{$AAU$} &   $  \{0_2,r_2,(b_r)_4\},\; 1 \leq r \leq 3t-1,\; r \neq 4 $  \\
  & $ \{0_2, 4_2, 2_3\}$ \\ \hline
  {$AUU$} &   $  \{\infty,0_3,0_4\},    \{0_2,0_3,(3t-2)_3\} $  \\\hline
  \multirow{2}{*}{$UUU$}&  $\{(b_r)_3, 0_4,r_4\},\; 1 \leq r \leq 3t-1  $ \\
  & $ \{0_3,r_3,(d_r+t-1)_3\},\; 1 \leq r \leq t-1 $ \\\hline
\end{tabular}  \\
& \\
\multicolumn{2}{c}{
\vspace{0.5cm}
\begin{tabular}{|c|>{\centering\arraybackslash}p{10cm}|}\hline
  \multirow{2}{*}{$PPU$} &   $  \{0_1,r_1,(b_r)_4\},\; 1 \leq r \leq 3t-1,\; r \neq 7,$  \\
  &    $ \{0_1, 7_1, 5_3\}, $  \\\hline
{$PAA$ }&    $  \{\infty,0_1,5_2\}$ \\\hline
  \multirow{3}{*}{$PAU$}&    $  \{0_1,(1-2r)_2,(2-r)_3\},\; 1 \leq r \leq 6t-5,\; r \neq 4,3t-1,3t, $\\
                         & $ \{0_1,0_2,(3t)_4\},     \{0_1,2_2,2_3\},     \{0_1,3_2,4_3\}, $  \\
                         & $ \{0_1,1_2,(3t+1)_3\},     \{7_1, 0_2, (b_{7})_4\}, \{0_1, 7_2, (b_{7})_4\} $ \\\hline
$PUU$ &    $ \{0_1,3_3,(3t+2)_3\},  $ \\\hline
  \multirow{2}{*}{$AAU$} &   $  \{0_2,r_2,(b_r)_4\},\; 1 \leq r \leq 3t-1,\; r \neq 7,$  \\
  &    $ \{0_2, 7_2, 5_3\},$  \\ \hline
  {$AUU$} &   $  \{\infty,0_3,(3t-1)_4\}, \{0_2,(3t+1)_3,(-1)_3\},$  \\\hline
  \multirow{2}{*}{$UUU$}&    $ \{0_3,(-b_r)_4,(-a_r)_4\},\; 1 \leq r \leq 3t-1,$ \\
  &    $ \{0_3,r_3,(d_r+t-1)_3\},\; 1 \leq r \leq t-1, $ \\\hline
\end{tabular}}
\end{tabular}}
\end{center}
\caption{The blocks of $\STS(4a-3)$ when $a \equiv 6 \pmod{24}$ (top left), $a \equiv 12\pmod{24}$ (top right), and $a \equiv 0,18 \pmod {24}$ (bottom), respectively. \label{0_6}}
\end{table}
\renewcommand{\arraystretch}{1}

\item   When $a \equiv 4,10 \pmod{24}$ let $a = 6t+4$.  When $a=10$, the minimal embedding of $K_{1,9}$ in a $\STS(37)$ is given in the embedding appendix file.  When $a \geq 28$, let $(a_r,b_r)$ be a Skolem sequence or order $3t+1$.  Let the base blocks of a cyclic $\STS(a-1)$ be $\{c_r,d_r,e_r\}$, where $1 \leq r \leq t+1$ \cite{Handbook}. The base blocks of a $\STS(4a-3)$ are presented in the left of \cref{4_6}. 
\item When $a \equiv 16,22 \pmod{24}$,  let $a = 6t+4$. Let $(a_r,b_r)$ be a hooked Skolem sequence of order $3t+1$.  Let the base blocks of a cyclic $\STS(a-1)$ be $\{c_r,d_r,e_r\}$, where $1 \leq r \leq t+1$ \cite{Handbook}. Let $1 \leq i \leq (3t+1)/2$ and $j=2i$, $k = a_{i} + b_{j} + 3 + a_{2t+1}$ and $l = k-i = a_{j} + b_{i} + 3 + a_{2t+1}$.  Since $(3t+1)/2 > 3$, there is a choice of $i$ such that neither $k$ nor $l$ are zero. The right of \cref{4_6} shows the base blocks of the $\STS(4a-3)$.

\renewcommand{\arraystretch}{1.5}
\begin{table}[ht]
\begin{center}
  \scalebox{0.74}{
\begin{tabular}{cc}
\begin{tabular}{|c|>{\centering\arraybackslash}p{.55\textwidth}|}\hline
  \multirow{2}{*}{$PPU$} &  $   \{0_1,r_1,(b_r)_4\} ,\; 1 \leq r \leq 3t+1,\; r \neq 2t+1 ,$ \\
  &   $  \{(2t+1)_1,0_1,(a_{2t+1}-(2t+1))_3\},$ \\\hline
{$PAA$}&    $ \{\infty, 0_1, (-a_{2t+1})_2\}, $ \\\hline
  \multirow{2}{*}{$PAU$}&    $ \{(a_{2t+1}-r)_1,0_2,r_3\} ,\; 1 \leq r \leq 6t+2,\;r \neq a_{2t+1} \pm (2t+1),$ \\
  &   $  \{0_1,(2t+1)_2,(b_{2t+1})_4\},     \{0_1,(-2t-1)_2,(a_{2t+1})_4\},$ \\\hline
$PUU$ &    $ \{0_1,(-a_{2t+1})_3,0_4\} , $  \\\hline
  \multirow{2}{*}{$AAU$} &   $  \{0_2,r_2,(b_r)_4\} ,\; 1 \leq r \leq 3t+1,\; r \neq 2t+1 , $ \\
  &   $  \{0_2,(-2t-1)_2,(a_{2t+1}+2t+1)_3\}$ \\ \hline
  {$AUU$} &  $   \{0_2,0_3,0_4\} ,     \{\infty, 0_3, (b_{2t+1})_4\}, $   \\\hline
  \multirow{2}{*}{$UUU$}&   $  \{0_3,r_3,(b_r)_4\} ,\; 1 \leq r \leq 3t+1,\; r \neq 2t+1 ,$\\
                         &  $   \{(c_r)_4,(d_r)_4,(e_r)_4\} ,\; 1 \leq r \leq t+1, $\\
  &   $  \{0_3,(2t+1)_3,(4t+2)_3\} , $\\\hline
\end{tabular}
&
\begin{tabular}{|c|>{\centering\arraybackslash}p{.55\textwidth}|}\hline
  \multirow{2}{*}{$PPU$} &  $   \{0_1,r_1,(b_r)_4\} ,\; 1 \leq r \leq 3t+1,\; r \neq j ,$\\
  & $\{0_1,j_1,(2k-1-a_{2t+1})_3 \}, $\\\hline
{$PAA$ }& $  \{\infty, 0_1, (-a_{2t+1}-3)_2\} $  \\\hline
  \multirow{2}{*}{$PAU$}&    $ \{(a_{2t+1}+3-r)_1,0_2,(2+r)_3\} ,\; 1 \leq r \leq 6t+2,\; r \neq k,l, $\\
  & $\{0_1,(b_j+a_i)_2,(b_j)_4, \{0_1,(a_j+b_i)_2,(a_j)_4\}, $ \\\hline
$PUU$ &  $   \{0_1,(-a_{2t+1}-1)_3,(-1)_4\} , $  \\\hline
  \multirow{2}{*}{$AAU$} &   $  \{0_2,(-r)_2,(-b_r)_4\} ,\; 1 \leq r \leq 3t+1,\; r \neq i ,$\\
  & $\{0_2,i_2,(2+k)_3\}, $   \\ \hline
  {$AUU$} &  $   \{\infty, 0_3, (b_{2t+1})_4\} ,     \{0_2,2_3,1_4\} ,   $    \\\hline
  \multirow{2}{*}{$UUU$}&   $  \{(c_r)_4,(d_r)_4,(e_r)_4\} ,\; 1 \leq r \leq t+1, $\\
  &   $  \{0_3,(r)_3,(b_r)_4\} ,\; 1 \leq r \leq 3t+1, r \neq 2t+1 ,$\\
  & $\{0_3,(2t+1)_3,(4t+2)_3\} , $ \\\hline
\end{tabular}
\end{tabular}
}
\end{center}
\caption{The blocks of $\STS(4a-3)$ when $a \equiv 4,10 \pmod{24}$ (left) and $a \equiv 16,22 \pmod{24}$ (right), respectively. \label{4_6}}
\end{table}
\renewcommand{\arraystretch}{1}

\end{itemize}

In each of the cases above, the copy of $K_{1,a-1}$ will be on points $A$ with edges $\{\{\infty, i_{1}\}:\; i \in \Z_{a-1}\}$.   These only appear in blocks of the form $\{\infty, x_1, y_2\}$, which are $PAA$ blocks.  The non-edges of $K_{1,a-1}$ are all of the form $\{x_2,y_2\}$ and these only appear in blocks of the form $\{x_2,y_2,z_4\}$ and $\{x_2,y_2,z_3\}$ which are $AAU$ blocks.  The edges of the complete graph $K_{p}$ on $P$ are all of the form $\{x_1,y_1\}$ and these appear only on blocks of the form $\{x_1,y_1,z_4\}$ and $\{x_1,y_1,z_3\}$ which are $PPU$ blocks.  Finally because there is both a base block of form $\{x_1,y_1,z_3\}$ and one of form $\{x_1,y_1,z_4\}$, every point in $U$ appears on at least one block from $PPU$.  These are the necessary and sufficient conditions for $K_{1,a-1}$ to be embedded in a $\STS$. \end{proof}

\begin{lemma}
When $a=3$, the star $K_{1,2}$ can be minimally embedded in a $\STS(13)$. When $a=7$, the star $K_{1,6}$ can be minimally embedded in a $\STS(25)$.  When $a=8$, the star $K_{1,7}$ can be minimally embedded in a $\STS(27)$.  Otherwise let $9 \leq a \equiv 1,2,3 \pmod 6$ and set $a = 6t+1,6t+2,6t+3$ respectively.  Then $K_{1,a-1}$ can be embedded in a $\STS(24t+7)$.  When $a \equiv 3 \pmod 6$ this is a minimal embedding in a $\STS(4a-5)$.  When $a \equiv 2 \pmod 3$ this is an embedding in a $\STS(4a-1)$, not the minimal order but the next smallest admissible order.   When $a \equiv 1 \pmod 3$ this is an embedding in a $\STS(4a+3)$, not the minimal order but the third smallest possible order.  
\end{lemma}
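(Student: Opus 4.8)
\emph{Lower bounds and small cases.} As already recorded for stars, one has $p\ge\chi'(K_{1,a-1})=a-1$ from \eqref{chi_g} and $u\ge p+a-4$ from \eqref{auu} (for $a\ge4$), hence $v\ge 4a-6$, a bound that ignores $v\equiv 1,3\pmod 6$. Imposing admissibility pins down the target orders: for $a=6t+3$ we have $4a-6=24t+6\equiv 0$, so $24t+7=4a-5$ is the smallest admissible order; for $a=6t+2$, $4a-6\equiv 2$, so $4a-5$ is smallest admissible and $24t+7=4a-1$ is the next; for $a=6t+1$, $4a-6\equiv 4$, so $4a-3$, $4a-1$, $24t+7=4a+3$ are the first three admissible orders. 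When $a\equiv 3\pmod6$ I would further argue, exactly as in \cref{k_a_theorem}, that $v=24t+7$ together with \cref{u_bounds} forces $p=6t+2$ and $u=12t+2$; since the construction is built to meet these values exactly, it is genuinely minimal. The small cases $a=3,7,8$ I would settle by exhibiting explicit Steiner triple systems with the embedding, in the style of \cref{star_a_4} and the appendix: for $a=7$ and $a=8$ the resulting orders $25=4a-3$ and $27=4a-5$ are just $4a-6$ rounded up to an admissible value, so they are minimal (and beat the general formula), while $a=3$ additionally needs a finer, essentially computational, argument to rule out $\STS(7)$ and $\STS(9)$.

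\emph{The general construction.} Following Cho's rotational Steiner triple systems, I would take the point set $(\Z_m\times\{1,2,3,4\})\cup F$ with a cyclic $\Z_m$-action fixing the small set $F$ of ``infinite'' points pointwise, with $4m+|F|=24t+7$, and the partition $P=(\Z_m\times\{1\})\cup F_P$, $A=(\Z_m\times\{2\})\cup F_A$, $U=(\Z_m\times\{3,4\})\cup F_U$ (where $F=F_P\sqcup F_A\sqcup F_U$) chosen to realise the cardinalities forced above; the embedded star sits on $A$, with a point of $F_A$ as centre (or an orbit point, if $|F_A|=0$) and the remaining points of $A$ as leaves. The design is presented by one base block per $\Z_m$-orbit, and the engine is a Skolem, hooked Skolem, or split Skolem sequence — together with a Langford variant, the base blocks of a small cyclic $\STS(m)$, and adjusted sequences such as those of \cref{special_skolem} and \cref{special_hooked} — selected according to $a\bmod24$ so that the relevant existence theorem applies, used to lay out the difference blocks inside layers $3$ and $4$ and between layers so that every pair of $\Z_m\times\{1,2,3,4\}$ is covered exactly once; a handful of hand-written blocks through $F$ cover the remaining pairs. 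As in the previous lemma this forces a case split by $a\bmod24$ within each residue class mod $6$.

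\emph{Verifying the embedding.} With the block list in hand I would classify each base block among $PPU,PAA,PAU,PUU,AAU,AUU,UUU$ and confirm that none has the forbidden type $PPP$, $PPA$, or $AAA$. The three criteria of \cref{sec_restrictions} then follow by inspection: every pair inside $P$ lies on a $PPU$ base block, so every edge of $K_p$ is realised and, since some $PPU$ base block passes through each point of $U$, every point of $U$ is unplayable; the star edges $\{\text{center},x\}$ with $x\in A$ occur only in $PAA$ base blocks; and the non-edges $\{x,y\}$ with $x,y\in A$ distinct from the centre occur only in $AAU$ base blocks $\{x_2,y_2,z_3\}$ or $\{x_2,y_2,z_4\}$ (plus the analogues through $F_A$). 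Hence the available hypergraph for this $P,A,U$ partition is exactly $K_{1,a-1}$.

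\emph{Main obstacle.} The genuinely hard part is the bookkeeping that the base blocks really do form an $\STS(24t+7)$ — that every one of the $\binom{24t+7}{2}$ pairs is covered exactly once — which is precisely where the choice of Skolem-type sequence and the $a\bmod24$ split do all the work. Getting the exceptional differences right, namely the ones a plain Skolem sequence cannot place (this is why the hooked and split variants and the occasional hand-chosen block are needed), and reconciling them with the few blocks through $F$, is delicate and error-prone. A secondary difficulty is arranging that the $PAA$ and $AAU$ block families come out exactly right in every residue class, since that is what dictates how the fixed points $F$ are distributed among $P$, $A$, and $U$; if this is mishandled the embedded graph acquires a stray extra or missing edge.
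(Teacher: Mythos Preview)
Your broad outline matches the paper's approach: a rotational $\STS(24t+7)$ on $(\Z_{6t+1}\times\{1,2,3,4\})\cup\{\infty_1,\infty_2,\infty_3\}$ with $P,A,U$ essentially as you describe, base blocks driven by (hooked) Skolem sequences together with a cyclic $\STS(6t+1)$, and a split by $t\bmod4$ (equivalently $a\bmod24$). Your lower-bound discussion and the treatment of the small cases are also in line with the paper.

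However, you have misidentified the main obstacle. Verifying that the base blocks cover every pair exactly once is \emph{not} the hard part here: that is essentially Cho's construction and is routine once the Skolem sequence and the cyclic $\STS(6t+1)$ are in hand. The genuine difficulty, which your proposal glosses over, is that a single rotational $\STS(24t+7)$ is built to serve all three residues $a\equiv1,2,3\pmod6$ simultaneously, and when $a\equiv1,2\pmod6$ the natural partition leaves one or two points of $U$---namely $\infty_3$, and for $a\equiv1$ also $0_2$---not lying on any $PPU$ block. Simply redistributing fixed points among $F_P,F_A,F_U$, as you suggest, does not repair this: a $U$-point with no $PPU$ block through it is still failing after relabelling. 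The paper's remedy is to \emph{modify} the $\STS$ by deleting four carefully chosen blocks and inserting four others covering the same pairs (in effect a Pasch switch), so that the offending point is now on a $PPU$ block. This swap is only possible because certain Skolem pairs sit in specific positions---which is exactly why \cref{special_skolem} and \cref{special_hooked} insist on $b_1=2$ and $b_2=3$. You cite those lemmas but never say what they buy; without this block-swap mechanism the cases $a\equiv1,2\pmod6$ do not go through, and your ``secondary difficulty'' is in fact the crux.
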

\begin{proof}
When $a \equiv 1 \pmod 6$ the fact that $v \equiv 1,3 \pmod 6$ gives that $v \geq 4a-3$.  We have verified that a minimal embedding of $K_{1,6}$ in a $\STS(25)$ exists.  Our construction does not construct an embedding in a smallest possible $\STS$ but it does give the third smallest possible order of a $\STS$: $v=4a+3$.    When $a \equiv 2 \pmod 6$, the fact that $v \equiv 1,3 \pmod 6$ shows that the minimal possible $\STS$ would have order $v = 4a-5$.  The star $K_{1,7}$ can be minimally embedded in a $\STS(27)$.  For $14 \leq a$, we construct an embedding in the second smallest admissible order of a $\STS$: $v=4a-1$.  When $a \equiv 3 \pmod 6$, the fact that $v \equiv 1,3 \pmod 6$ shows that the minimal possible $\STS$ would have order $v = 4a-5$.  The bounds determine that for $a=3$ it is not possible to embed $K_{1,2}$ in either a $\STS(7)$ nor $\STS(9)$. \cref{star_a_3} shows a minimal embedding of $K_{1,2}$ in a $\STS(13)$.  We construct the minimal embedding in a $\STS(4a-5)$.  The minimal embeddings of $K_{1,6}$ in a $\STS(25)$ and $K_{1,7}$ in a $\STS(27)$ can be found in the embedding appendix file.

\renewcommand{\arraystretch}{1.5}
\begin{table}[h]
\begin{center}
\begin{tabular}{|c|c|}\hline
 $P$ & $\overline{4}$, $\overline{5}$, $\overline{6}$, $\overline{7}$ \\\hline
 $A$ &  $\underline{0}$, $\underline{3}$, $\underline{9}$ \\\hline
 $U$ & 1, 2, 8, 10, 11, 12\\\hline
\multirow{2}{*}{$PPU$} & 
  \{$\overline{4}$, $\overline{5}$, 10\}, \{$\overline{4}$, $\overline{6}$, 11\},     \{$\overline{4}$, $\overline{7}$, 12\},   \\ 
  & \{$\overline{5}$, $\overline{6}$, 8\},  \{$\overline{5}$, $\overline{7}$, 2\},   \{$\overline{6}$, $\overline{7}$, 1\} \\\hline
 $PAA$ & \{$\overline{5}$, $\underline{0}$, $\underline{9}$\}, \{$\overline{7}$, $\underline{3}$, $\underline{9}$\} \\\hline
  \multirow{2}{*}{$PAU$ }&
\{$\overline{4}$, $\underline{9}$, 1\},     \{$\overline{4}$, $\underline{0}$, 2\},    \{$\overline{4}$, $\underline{3}$, 8\},    \{$\overline{6}$, $\underline{9}$, 2\},    \{$\overline{6}$, $\underline{0}$, 10\}, \\
&     \{$\overline{6}$, $\underline{3}$, 12\},    \{$\overline{5}$, $\underline{3}$, 11\},    \{$\overline{7}$, $\underline{0}$, 8\} \\ \hline
  $PUU$ & \{$\overline{7}$, 10, 11\}, \{$\overline{5}$, 1, 12\} \\\hline
  $AAU$ & \{$\underline{0}$, $\underline{3}$, 1\} \\\hline
  \multirow{2}{*}{$AUU$} &\{$\underline{9}$, 10, 12\},    \{$\underline{9}$, 8, 11\},  \\
  & \{$\underline{3}$, 2, 10\},  \{$\underline{0}$, 11, 12\} \\\hline
  {$UUU$}&\{1, 8, 10\},    \{1, 2, 11\},    \{2, 8, 12\} \\ \hline
\end{tabular}
  \caption{$\STS(13)$ with a $K_{1,2}$ embedded. \label{star_a_3}}
\end{center}
\end{table}
\renewcommand{\arraystretch}{1}

When $a \equiv 1,2,3 \pmod 6$, let $a = 6t+1,6t+2,6t+3$ respectively.  In each case we embed the star in an $\STS(24t+7)$ with point set  
  \[
    V = \left (\Z_{6t+1} \times \{1,2,3,4\}\right ) \cup (\{\infty\} \times \{1,2,3\}),
  \]
  where the second elements in the Cartesian products, $\{1,2,3,4\}$ or $\{1,2,3\}$,  are written as subscripts.
  We let
  \begin{align*}
    P &= \left (\Z_{6t+1} \times \{1\}\right) \cup \{\infty_1\} \\
    A' &= \left (\Z_{6t+1} \times \{2\}\right ) \cup \{\infty_2\} \\
    U' &= \left (\Z_{6t+1} \times \{3,4\}\right)
  \end{align*}
  When $a \equiv 2 \pmod 6$, $A = A'$ and $U = U' \cup \{\infty_3\}$. When $a \equiv 3 \pmod 6$, $A = A' \cup \{\infty_3\}$ and $U = U'$.  When $a \equiv 1 \pmod 6$, $A = A' \setminus \{0_2\}$ and $U = U' \cup \{0_2,\infty_3\}$. We will construct an initial  $\STS(24t+7)$ which is preserved by the action of $\Z_{6t+1}$.  When $a \equiv 3 \pmod 6$ this $\STS$ will contain an embedding of $K_{1,a-1}$. But when $a \equiv 1,2 \pmod 6$, we will have to modify it to ensure that all points in $U$ appear on at least one block of $PPU$. The element $g \in \Z_{6t+1}$ acts on this set via $g(i_{j}) = (i+g)_j$ and $g(\infty_i) = \infty_i$. In each case, we will first present the initial $\STS(24t+7)$ by giving one block per orbit of the action.  We will subsequently describe how the blocks of the $\STS$ must be modified to satisfy our embedding requirements when $a \equiv 1,2 \pmod 6$.
  \begin{itemize}
\item Let  $t \equiv 0,3 \pmod 4$ ($a \equiv 1,2,3,19,20,21 \pmod {24}$).  Let $(a_r,b_r)$ be a Skolem sequence of order $3t$ from \cref{special_skolem} where $b_1 = a_1 + 1 = 2$. Let $\{0,c_r,d_r\}$, $1 \leq r \leq t$ be the base blocks of a cyclic $\STS(6t+1)$ \cite{Handbook}. The left of \cref{three_a_case} shows the base blocks of a $\STS(24t+7)$ which is sufficient for our needs when $a \equiv 3,21 \pmod {24}$.  It is {\it almost} sufficient for our needs when $a \equiv 2,20\pmod {24}$: All edges of $K_{1,a-1}$ are in $PAA$ blocks, all non-edges are in $AAU$ blocks and all edges in $P$ are in blocks of $PPU$.  Every point of the form $i_3,i_4 \in U$ for $i \in \Z_{6t+1}$ is in at least one block from $PPU$ but $\infty_3$ is not in any block of $PPU$.  To establish this last needed property, we remove the blocks
\[
  \{0_1,1_1,(b_1)_4\}, \{0_3,1_3,(b_1)_4\}, \{0_1,0_3,\infty_3\}, \{1_1,1_3,\infty_3\},
\]
and add the blocks
\[
  \{0_1,1_1,\infty_3\}, \{0_3,1_3,\infty_3\}, \{0_1, 0_3, (b_1)_4\},\{1_1,1_3,(b_1)_4\}
\]
which cover exactly the same pairs.  Now $\infty_3 \in U$ is on the $PPU$ block $\{0_1,1_1,\infty_3\}$.

When $a \equiv 1,19 \pmod {24}$, $0_2 \in U$ but does not appear on any $PPU$ block and points $0_2$ and $\infty_2$ are on a $PAA$ thus making an edge in the graph.  We want to remove this edge from the graph and move $0_2$ to $U$.  Here is where we use the fact that $b_1 = a_1 + 1 = 2$.  Remove the blocks
\[
\{\infty_1, 0_2, 0_4\}, \{-2_1, 0_2, -1_3\}, \{\infty_1, -2_1, -2_3\}, \{-1_3,-2_3,0_4\},
\]
and add the blocks
\[
\{\infty_1,-2_1,0_2\}, \{-2_1,-1_3,-2_3\}, \{\infty_1, -2_3,0_4\}, \{0_2,-1_3,0_4\}.
\]
The first of these new blocks is a $PPU$ block forcing $0_2 \in U$ and thus removing it from the graph.

\item Let $t \equiv 1,2 \pmod 4$ ($a \equiv 7,8,9,13,14,15 \pmod {24}$).  Let $(a_r,b_r)$ be a hooked Skolem sequence of order $3t$ from \cref{special_hooked} where $b_2 = a_2 + 2 = 3$. Let $\{0,c_r,d_r\}$, where $1 \leq r \leq t$, be the base blocks of a cyclic $\STS(6t+1)$ \cite{Handbook}. The base blocks of a $\STS(24t+7)$ are presented in the right of \cref{three_a_case} but, as before, this is sufficient for our embedding when $a \equiv 9,15 \pmod {24}$ but needs some adjustment to properly embed $K_{1,a-1}$ when $a \equiv 8,14 \pmod {24}$.  In this case, $\infty_3$ is not in any block of $PPU$.  To ensure this last needed property, we remove the blocks
\[
  \{0_1,1_1,(b_1+1)_4\}, \{0_3,1_3,(b_1+1)_4\}, \{0_1,0_3,\infty_3\}, \{1_1,1_3,\infty_3\},
\]
and add the blocks
\[
  \{0_1,1_1,\infty_3\}, \{0_3,1_3,\infty_3\}, \{0_1, 0_3, (b_1+1)_4\},\{1_1,1_3,(b_1+1)_4\}
\]
which cover exactly the same pairs.  Now $\infty_3 \in U$ is on the $PPU$ block $\{0_1,1_1,\infty_3\}$.

When $a \equiv 7,13 \pmod {24}$, we have $0_2 \in U$ but it does not appear on any $PPU$ block and points $0_2$ and $\infty_2$ are on a $PAA$, thus making an edge in the graph.  We want to remove this edge from the graph and move $0_2$ to $U$.  Here is where we use the fact that $b_2 = a_2 + 2 = 3$.  Remove the blocks
\[
\{\infty_1, 0_2, 0_4\}, \{-4_1, 0_2, -2_3\}, \{\infty_1, -4_1, -4_3\}, \{-2_3,-4_3,0_4\},
\]
and add the blocks
\[
\{\infty_1,-4_1,0_2\}, \{-4_1,-2_3,-4_3\}, \{\infty_1, -4_3,0_4\}, \{0_2,-2_3,0_4\}.
\]
The first of these new blocks is a $PPU$ block forcing $0_2 \in U$ and thus removing it from the graph. \qedhere

\renewcommand{\arraystretch}{1.5}
\begin{table}[ht]
  \begin{center}
\begin{tabular}{cc}
\begin{tabular}{|c|>{\centering\arraybackslash}p{.31\textwidth}|}\hline
  \multirow{2}{*}{$PPU$} & $\{0_1,r_1,(b_r)_4\},\; 1 \leq r \leq 3t $ \\
  & $\{\infty_1, 0_1, 0_3\}$ \\\hline
{$PAA$ }& $ \{\infty_2,0_1,0_2\} $ \\\hline
{$PAU$ or $PAA$} & $\{\infty_1,\infty_2,\infty_3\}$ \\ \hline
  \multirow{2}{*}{$PAU$}& $ \{0_1,(2r)_2,r_3\},\; 1 \leq r \leq 6t, $\\
  & $\{\infty_1,0_2,0_4\}$\\\hline
$PUU$ or $PAU$& $ \{\infty_3,0_1,0_4\} $   \\\hline
  {$AAU$} & $\{0_2,r_2,(b_r)_4\},\; 1 \leq r \leq 3t$ \\ \hline
  {$AUU$ or $AAU$} & $ \{\infty_3,0_2,0_3\} $    \\\hline
  {$AUU$} & $ \{\infty_2,0_3,0_4\} $    \\\hline
  \multirow{2}{*}{$UUU$}& $\{0_3,r_3,(b_r)_4\},\; 1 \leq r \leq 3t$ \\
 & $ \{0_4,(c_r)_4,(d_r)_4\},\; 1 \leq r \leq t$b \\\hline
\end{tabular}
&
\begin{tabular}{|c|>{\centering\arraybackslash}p{.31\textwidth}|}\hline
  \multirow{2}{*}{$PPU$} & $\{0_1,r_1,(b_r+1)_4\},\; 1 \leq r \leq 3t $\\
  & $\{\infty_1, 0_1, 0_3\}$ \\\hline
{$PAA $}& $ \{\infty_2,0_1,0_2\} $ \\\hline
{$PAU$ or  $PAA$} & $\{\infty_1,\infty_2,\infty_3\} $\\ \hline
  \multirow{2}{*}{$PAU$}& $ \{0_1,(2r)_2,r_3\},\; 1 \leq r \leq 6t $\\
  & $ \{\infty_1,0_2,0_4\}$ \\\hline
$PUU$ or $PAU$& $ \{\infty_3,0_1,0_4\} $   \\\hline
  {$AAU$} &$ \{0_2,r_2,(b_r+1)_4\},\; 1 \leq r \leq 3t $\\ \hline
  {$AUU$  or $AAU$} & $ \{\infty_3,0_2,0_3\} $    \\\hline
  {$AUU$} & $ \{\infty_2,0_3,0_4\} $   \\\hline
  \multirow{2}{*}{$UUU$}& $\{0_3,r_3,(b_r+1)_4\},\; 1 \leq r \leq 3t $\\
 &  $\{0_4,(c_r)_4,(d_r)_4\},\; 1 \leq r \leq t$ \\\hline
\end{tabular}
\end{tabular}
\end{center}
\caption{The blocks of $\STS(24t+7)$ when $t \equiv 0,3 \pmod 4$ (left) and $t \equiv 1,2 \pmod{4}$ (right), respectively. \label{three_a_case}}
\end{table}
\renewcommand{\arraystretch}{1}
\end{itemize}

\end{proof}

\begin{lemma}
 Let $5 \leq a \equiv 5 \pmod 6$. Then $K_{1,a-1}$ can be embedded in a $\STS(4a-5)$. This embedding is the minimal possible embedding in a $\STS$.
\end{lemma}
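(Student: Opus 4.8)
\emph{Minimality.} Since $\chi'(K_{1,a-1})=a-1$, \cref{chi_g} gives $p\geq a-1$, and putting $e=a-1$ in \cref{auu} gives $u\geq p+a-4$ (using $a\geq 5$, so $4/a<1$). Hence $v=p+a+u\geq 2p+2a-4\geq 4a-6$. For $a\equiv 5\pmod 6$ we have $4a-6\equiv 2\pmod 6$, which is not an admissible order for a Steiner triple system, while $4a-5\equiv 3\pmod 6$ is; so an embedding of $K_{1,a-1}$ into a $\STS(4a-5)$ is of minimum order. The same estimate forces $p=a-1$ and $u=2a-4$ in any minimal embedding, fixing the set sizes below.

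\emph{The system.} Following the preceding lemmas and Cho's rotational constructions, write $a=6t+5$ and take
\[
  V=\bigl(\Z_{a-2}\times\{1,2,3,4\}\bigr)\cup\{\infty_1,\infty_2,\infty_3\},
\]
on which $\Z_{a-2}$ acts by $g(i_j)=(i+g)_j$ and fixes each $\infty_k$; here $|V|=4(a-2)+3=4a-5$. Put
\[
  P=(\Z_{a-2}\times\{1\})\cup\{\infty_1\},\quad A=(\Z_{a-2}\times\{2\})\cup\{\infty_2,\infty_3\},\quad U=\Z_{a-2}\times\{3,4\},
\]
so $|P|=a-1$, $|A|=a$, $|U|=2a-4$, as forced above. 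Let the embedded $K_{1,a-1}$ have centre $\infty_2$ and leaves $A\setminus\{\infty_2\}$; its edges are then the pairs of $A$ through $\infty_2$, and its non-edges are the remaining pairs of $A$. Since blocks of types $PPP$, $PPA$, $AAA$ cannot occur, by the restrictions of \cref{sec_restrictions} it suffices to build a $\STS(4a-5)$ on $V$ with no $PPP$ or $PPA$ block in which (i) every point of $U$ lies on some $PPU$ block, (ii) each pair $\{\infty_2,x\}$ with $x\in A$ lies on a $PAA$ block, and (iii) each pair of leaves lies on an $AAU$ block. Note that the block through $\{\infty_2,\infty_3\}$ must be fixed by $\Z_{a-2}$, hence equals $\{\infty_1,\infty_2,\infty_3\}$, a $PAA$ block, which takes care of the edge $\{\infty_2,\infty_3\}$.

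\emph{Base blocks and verification.} As in \cref{0_6} and \cref{three_a_case}, I would present one block per $\Z_{a-2}$-orbit, sorted by type. A Skolem or hooked Skolem sequence of order $3t+1$ (whichever exists, depending on $t\bmod 4$), taken from \cref{special_skolem} or \cref{special_hooked} so that a chosen short pair sits in a prescribed position, supplies blocks $\{0_3,r_3,(b_r)_4\}\in UUU$ and $\{0_2,r_2,(b_r)_4\}\in AAU$ which together cover all pairs $\{x_3,y_3\}$ and $\{x_2,y_2\}$ and all mixed pairs $\{x_3,y_4\}$, $\{x_2,y_4\}$ of nonzero shift; the base blocks of a cyclic $\STS(a-2)$, placed on $\Z_{a-2}\times\{4\}$, cover the pairs $\{x_4,y_4\}$. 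The orbit representatives of types $PAA$, $PAU$, $PUU$, $AUU$, and the remaining $AAU$ orbit on $\infty_3$, are then chosen so that every leaf-pair lands in an $AAU$ block, every pair $\{\infty_2,x\}$ in a $PAA$ block, and each remaining cross-pair (between $P$ and $A$, between $P$ and $U$, between $A$ and $U$, and the shift-zero mixed pairs inside $U$) in exactly one block, with no $PPP$ or $PPA$ block arising. Correctness then follows as in the proof of \cref{k_a_theorem}: a counting identity $|PPU|+|PAA|+\cdots+|UUU|=\binom{4a-5}{2}/3$ together with the fact that every pair has been covered shows the block list is a $\STS(4a-5)$, and conditions (i)--(iii) are read off the representatives.

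\emph{Main obstacle.} The work is in choosing the block orbits so that (i)--(iii) hold simultaneously, and two things need care. First, $a-2\equiv 3\pmod 6$, so the cyclic $\STS(a-2)$ contributes a short orbit of $\tfrac{a-2}{3}$ blocks on $\Z_{a-2}\times\{4\}$ that must be inserted and counted with its true length (as was already done in the $a\equiv 4\pmod 6$ case). Second, just as in the modifications made in the previous two lemmas to bring $0_2$ and $\infty_3$ onto a $PPU$ block, the ``generic'' block list will typically leave one or two points of $U$ (an $\infty$ and/or a point $x_4$) off every $PPU$ block, and repairing this by exchanging four blocks that cover the same six pairs is exactly why the controlled Skolem and hooked Skolem sequences of \cref{special_skolem} and \cref{special_hooked} are invoked. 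The smallest cases $t=0,1$, namely $a=5,11$, where the Skolem sequence and/or the cyclic $\STS(a-2)$ degenerate, would be given by explicit designs as for the other small stars. Verifying that the repair swaps introduce no $PPA$ or $PPP$ block and double-cover no pair is the fiddly part; everything else is bookkeeping.
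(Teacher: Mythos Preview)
Your setup and minimality argument are correct and match the paper's: the point set, the partition into $P,A,U$, the $\Z_{a-2}$-action, the forced block $\{\infty_1,\infty_2,\infty_3\}$, and the idea of using a (hooked) Skolem sequence of order $3t+1$ together with base blocks of a cyclic $\STS(6t+3)$ on the fourth copy are exactly what the paper does.

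Where you go wrong is in the ``Main obstacle'' paragraph. You anticipate repair swaps, special Skolem sequences from \cref{special_skolem}/\cref{special_hooked}, and ad hoc treatment of $a=5,11$ --- none of which occur here. The reason is that in this residue class both $\infty_2$ and $\infty_3$ belong to $A$, so $U=\Z_{6t+3}\times\{3,4\}$ contains no $\infty$ point at all. Once you also place $\{0_1,r_1,(b_r)_4\}$ (for $1\le r\le 3t+1$) among the $PPU$ base blocks --- a family you omit from your block list --- their orbits already cover every $x_4$, while the single base block $\{\infty_1,0_1,0_3\}$ covers every $x_3$; so condition (i) holds without modification. The paper's construction therefore works uniformly for all $t\ge0$ with ordinary (hooked) Skolem sequences, no swaps, and no small-case exceptions. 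Your perceived difficulty came from extrapolating the $a\equiv 1,2\pmod 6$ repairs (where $\infty_3$ or $0_2$ really did land in $U$) to a case where they simply don't apply.
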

\begin{proof}
When $a \equiv 5 \pmod 6$, the fact that $v \equiv 1,3 \pmod 6$ gives that $v \geq 4a-5$.   We construct the minimal possible embedding of $K_{1,a-1}$.  

When $a \equiv 5 \pmod 6$, let $a = 6t+5$.  We embed the star in a $\STS(24t+15)$ with point set  
  \[
    V = \left (\Z_{6t+3} \times \{1,2,3,4\}\right ) \cup \left( \{\infty\} \times \{1,2,3\}\right),
  \]
  where the second elements in the Cartesian products, $\{1,2,3,4\}$ and $\{1,2,3\}$,  are written as subscripts.
  We let
  \begin{align*}
    P &= \left (\Z_{6t+3} \times \{1\}\right) \cup \{\infty_1\}, \\
    A &= \left (\Z_{6t+3} \times \{2\}\right ) \cup \{\infty_2, \infty_3\}, \\
    U &= \left (\Z_{6t+3} \times \{3,4\}\right).
  \end{align*}

\begin{itemize}
\item When $t \equiv 0,1 \pmod{4}$ ($a \equiv 5, 11 \pmod{24}$),  let $(a_r,b_r)$ be a Skolem sequence of order $3t+1$.  Let the base blocks of a cyclic $\STS(6t+3)$ be $\{c_r,d_r,e_r\}$, where $1 \leq r \leq t+1$ \cite{Handbook}. The left side of \cref{5_6} shows the base blocks of the $\STS(24t+15)$.  These blocks satisfy the necessary conditions to embed the graph $K_{1,a-1}$.

\item When $t \equiv 2,3 \pmod{4}$ ($a \equiv 17, 23 \pmod{24}$),  let $(a_r,b_r)$ be a hooked Skolem sequence of order $3t+1$.  Let the base blocks of a cyclic $\STS(6t+3)$ be $\{c_r,d_r,e_r\}$, where $1 \leq r \leq t+1$ \cite{Handbook}. The right side of \cref{5_6} shows the base blocks of the $\STS(24t+15)$.  These blocks satisfy the necessary conditions to embed the graph $K_{1,a-1}$. \qedhere

\renewcommand{\arraystretch}{1.5}
\begin{table}[ht]
\begin{center}
\begin{tabular}{cc}
\begin{tabular}{|c|>{\centering\arraybackslash}p{0.39\textwidth}|}\hline
  \multirow{2}{*}{$PPU$} & $\{0_1,r_1,(b_r)_4\},\; 1 \leq r \leq 3t+1 $\\
  & $\{\infty_1, 0_1, 0_3\} $\\\hline
{$PAA$}& $ \{\infty_2,0_1,0_2\},  \{\infty_1,\infty_2,\infty_3\} $\\ \hline
  \multirow{2}{*}{$PAU$}& $ \{0_1,(2r)_2,r_3\},\; 1 \leq r \leq 6t+2 $\\
  & $ \{\infty_1,0_2,0_4\} ,  \{\infty_3,0_1,0_4\}  $  \\\hline
 \multirow{2}{*}{$AAU$} & $\{0_2,r_2,(b_r)_4\},\; 1 \leq r \leq 3t+1$ \\
   & $ \{\infty_3,0_2,0_3\}  $   \\\hline
  {$AUU$} &  $\{\infty_2,0_3,0_4\} $   \\\hline
  \multirow{2}{*}{$UUU$}& $\{0_3,r_3,(b_r)_4\},\; 1 \leq r \leq 3t+1 $\\
 & $ \{0_4,(c_r)_4,(d_r)_4\},\; 1 \leq r \leq t+1$ \\\hline
\end{tabular}
&
\begin{tabular}{|c|>{\centering\arraybackslash}p{0.39\textwidth}|}\hline
  \multirow{2}{*}{$PPU$} & $\{0_1,r_1,(b_r+1)_4\},\; 1 \leq r \leq 3t+1 $\\
  & $\{\infty_1, 0_1, 0_3\} $\\\hline
{$PAA$}& $ \{\infty_2,0_1,0_2\},  \{\infty_1,\infty_2,\infty_3\}$ \\ \hline
  \multirow{2}{*}{$PAU$}& $ \{0_1,(2r)_2,r_3\},\; 1 \leq r \leq 6t+2$ \\
  & $ \{\infty_1,0_2,0_4\} ,  \{\infty_3,0_1,0_4\}  $  \\\hline
 \multirow{2}{*}{$AAU$} & $\{0_2,r_2,(b_r+1)_4\},\; 1 \leq r \leq 3t+1 $\\
   &  $\{\infty_3,0_2,0_3\}$     \\\hline
  {$AUU$} & $ \{\infty_2,0_3,0_4\} $   \\\hline
  \multirow{2}{*}{$UUU$}& $\{0_3,r_3,(b_r+1)_4\},\; 1 \leq r \leq 3t+1 $\\
 & $ \{0_4,(c_r)_4,(d_r)_4\},\; 1 \leq r \leq t+1$ \\\hline
\end{tabular}
\end{tabular}
  \end{center}
\caption{The blocks of $\STS(24t+15)$ when $t \equiv 0,1 \pmod{4}$ (left) and $t \equiv 2,3 \pmod {4}$ (right), respectively. \label{5_6}}
\end{table}
\renewcommand{\arraystretch}{1}

\end{itemize}

\end{proof}

These three lemmas establish the existence of very small embeddings for any star.
\begin{theorem} \label{thm_star}
For all $a \leq 9$, the star $K_{1,a-1}$ can be embedded minimally in a $\STS$. For $9 \leq a$, if $a \equiv 0 ,4 \pmod 6$, then $K_{1,a-1}$ can be embedded in a $\STS(4a-3)$. If $a \equiv 1 \pmod 6$, then $K_{1,a-1}$ can be embedded in a $\STS(4a+3)$.  If $a \equiv 2 \pmod 6$, then $K_{1,a-1}$ can be embedded in a $\STS(4a-1)$. If $a \equiv 3,5 \pmod 6$, then $K_{1,a-1}$ can be embedded in a $\STS(4a-5)$. When $a \equiv 3,4,5 \pmod 6$ this embedding is minimal, otherwise this embedding is in a $\STS$ of the next smallest possible order when $a \equiv 0,2 \pmod 6$ and the third smallest possible order when $a \equiv 1 \pmod 6$.
\end{theorem}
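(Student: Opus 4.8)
The plan is to obtain \cref{thm_star} as an immediate corollary of the three lemmas just established in \cref{stars}, together with \cref{k_a_theorem}: all of the construction has already been done, so the remaining work is to check that the stated cases partition every admissible $a$, to read off which construction applies in each case, and to verify against the counting bounds of \cref{u_bounds} that the orders achieved deserve the labels ``minimal'', ``next smallest'', and ``third smallest''. (The degenerate value $a=1$, where $K_{1,0}=K_1$ is already the available hypergraph of the empty $\STS(1)$ at the start of play, can be dismissed in a line.)

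First I would dispense with the range $2 \le a \le 9$. The value $a=2$ gives $K_{1,1}=K_2$, handled minimally in \cref{k_a_theorem}. For $a=3$ the lemma covering $a \equiv 1,2,3 \pmod 6$ supplies the minimal embedding into a $\STS(13)$; for $a=4$ and $a=6$ the lemma covering $a \equiv 0,4 \pmod 6$ supplies minimal embeddings into a $\STS(19)$; for $a=5$ the lemma covering $a \equiv 5 \pmod 6$ supplies the minimal embedding into a $\STS(15)=\STS(4a-5)$; for $a=7$ and $a=8$ the lemma covering $a \equiv 1,2,3 \pmod 6$ supplies minimal embeddings into a $\STS(25)$ and a $\STS(27)$; and for $a=9$ that same lemma with $t=1$ gives a minimal embedding into a $\STS(24t+7)=\STS(31)=\STS(4a-5)$. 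I would record these in a short table, citing in each case the minimality claim proved inside the relevant lemma.

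Next I would treat $a \ge 10$ by reducing modulo $6$. The classes $a \equiv 0,4$ are covered by the lemma covering $a \equiv 0,4 \pmod 6$, which gives a $\STS(4a-3)$; the classes $a \equiv 1,2,3$ are covered by the lemma covering $a \equiv 1,2,3 \pmod 6$, which gives a $\STS(24t+7)$, and the arithmetic $24t+7 = 4a+3$, $4a-1$, $4a-5$ when $a = 6t+1$, $6t+2$, $6t+3$ converts this into the stated orders; the class $a \equiv 5$ is covered by the lemma covering $a \equiv 5 \pmod 6$, which gives a $\STS(4a-5)$. These six residue classes exhaust the integers $\ge 10$, so the statement's case list is complete.

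Finally I would justify the optimality adjectives from \cref{u_bounds}. Since $\chi'(K_{1,a-1}) = a-1$, \cref{chi_g} gives $p \ge a-1$, and \cref{auu} with $e = a-1$ gives $u \ge p+a-4$ for $a \ge 4$, whence $v = p+a+u \ge 4a-6$. Intersecting $v \ge 4a-6$ with the admissibility condition $v \equiv 1,3 \pmod 6$ shows that for $a \equiv 3,4,5 \pmod 6$ the smallest admissible order is exactly the order produced by the construction, so those embeddings are minimal; for $a \equiv 0,2 \pmod 6$ the construction lands on the second smallest admissible order, the smallest being $4a-5$, for which no embedding is supplied; and for $a \equiv 1 \pmod 6$ the construction lands on $4a+3$, which is the third admissible order not below $4a-6$, after $4a-3$ and $4a-1$. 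I do not expect a genuine obstacle in any of this; the only step that needs any care is the arithmetic that identifies $24t+7$ with the correct expression $4a \pm c$ and places that order correctly within the list of admissible orders at least $4a-6$, but this is routine once the residue of $a$ is fixed.
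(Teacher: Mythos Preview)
Your proposal is correct and matches the paper's approach exactly: the paper presents \cref{thm_star} as a direct summary of the three preceding lemmas (indeed, the only ``proof'' the paper offers is the sentence ``These three lemmas establish the existence of very small embeddings for any star''), and you are simply spelling out the case coverage and the arithmetic $24t+7 = 4a+3,\,4a-1,\,4a-5$ that the paper leaves implicit. Note that the minimality and near-minimality claims are already asserted and justified inside the lemmas themselves, so your final paragraph re-deriving them from \cref{u_bounds} is redundant (though harmless).
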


One possibility to improve these results to get closer to the minimal embedding is to reduce the order of the cyclic group and correspondingly increase the number of fixed points.  A constraint to doing this is the fact that the number of fixed points must be equivalent to $1,3 \pmod 6$ \cite{colbourn_triple_1999}. For example when $a \equiv 0,2 \pmod 6$ the minimal embedding would be in a $\STS(4a-5)$.  To construct this we could use four copies of $\mathbb{Z}_{a-3}$ with seven fixed points, three of which are in $A$.  Since $7 \equiv 1 \pmod 6$, this is permitted by the constraint.  When $a \equiv 1 \pmod 6$ the minimal embedding would be in a $\STS(4a-3)$. We cannot construct this with $\mathbb{Z}_{a-2}$ and five fixed points because $5 \not\equiv 1,3 \pmod 6$. One option is to decrease the size of the cyclic group further and have either nine or thirteen fixed points, both of which satisfy the constraint.

\section{Experiments and other graphs} \label{sec_other} 

We have conducted a series of experiments in SageMath, the free open-source mathematics software system, to find small embeddings for other families of graphs \cite{sagemath}.  The first step is a full implementation of all the bounds given in \cref{sec_restrictions} combined with the constraints that $p$, $a$, and $u$ must be positive integers (as long as the graph has at least two vertices) and $v \equiv 1,3 \pmod 6$.  For any given graph, we can determine all admissible parameters: $v$, $p$, $a$, $u$, $|PPU|$, $|PAA|$, $|PAU|$, $|PUU|$, $|AAU|$, $|AUU|$, and $|UUU|$ from the smallest possible $v$ up to a chosen largest $v$. 

To find small embeddings for different graphs, we have implemented two search strategies.  The first strategy is to search the entire game trees of \nofil{} played on Steiner triple systems of tractable sizes.  As we traverse the trees, we keep a record of all the available hypergraphs that are graphs.  For $v < 19$ we do this for all $\STS$ and for $19 \leq v$ we sample the space of all $\STS$ by building triple systems using a hillclimb with randomization. For any given graph we can determine the smallest $\STS$ where we have found this graph.  If we find the graph in a $\STS(v)$ for $v \leq 19$, then we know this is the smallest possible $v$. But since we are not fully sampling the spaces of $\STS(v)$ for $19 \leq v$, if the graph we find does not match the smallest admissible $v$ from analysis of the bounds, then we cannot be sure that we have identified the smallest possible embedding.

In the second strategy, for a given graph $G$ and choice of admissible $p$ and $u$, we build a set of triples that is sufficient for an embedding of $G$ if the set of triples can be completed to a full $\STS$.  To build the triples we construct edge colourings of $K_p$, $E(G)$, and $E(\overline{G})$ of sizes $u$, $p$, and $u$, respectively.  These colourings determine the sets of triples $PPU$, $PAA$, and $AAU$, respectively. Since every vertex in $U$ must be on a triple of $PPU$ we must have that every colour class in $K_p$ is non-empty.  We then use a hillclimb to try to complete the given set of triples to a $\STS$.  Based on the intuition that a more evenly distributed set of triples is easier to complete, we construct the initial set of triples from equitable edge colourings of the needed sizes.  This search proceeds from the smallest admissible $v$ up to a user specified maximum $v$ and for a fixed $v$ can prioritize a large $p$ or prioritize a large $u$ in its loop over possible parameters sets.  Since we are not exhaustively considering all possible relevant edge colourings to build the initial triples and as there is randomization in attempting to complete these initial triples to a Steiner triple system, we cannot claim that a given embedding is the smallest possible unless $v$ is the smallest admissible size.  The source code for all these implementations is available from the authors on request.

\subsection{Embedding empty graphs, $\overline{K_a}$}\label{empty_experiments}

\cref{empty_params} shows the parameters of possible target minimal embeddings for empty graphs.  The minimum possible $v$ does not grow linearly but rather as $2a + 2 + \sqrt{8a+1}$.  This eliminates the possibility of using an easy cyclic construction to construct minimal embeddings.  A  ``small'' cyclic construction might have $P$, $A$, and $U$ be one, two, and three copies, respectively, of $\mathbb{Z}_{n}$ for $n \approx a/2$ with fixed points as appropriate.  Recursive constructions, rather than direct constructions, might permit getting closer to the minimum values of $v$.  

The minimal embeddings matching parameters from \cref{empty_params} for $a=2,3,6$ do not exist, but these graphs are embedded in a $\STS(15)$, $\STS(15)$, and $\STS(25)$, respectively, which are each the next possible Steiner triple system order. Minimal embeddings matching \cref{empty_params} exist for $a=4,5$.  For $a=2,3,6$ the non-existence of an embedding matching parameters from \cref{empty_params} are for the same reason: in each case the parameters force every pair in $U$ (which would have sizes 6, 6, and 10 respectively)  to be in a $UUU$ triple which would imply the existence of Steiner triple systems of orders 6 and 10, which cannot exist. These are the first cases we have encountered where a structural constraint prohibits the existence of a $\STS$ meeting the bounds from \cref{u_bounds}.  In fact, looking at \cref{empty_params}, most of the rows have $PUU$ and $AUU$ empty, similarly forcing the $UUU$ blocks to form a $\STS$ and implying that $u \equiv 1,3 \pmod 6$ is a necessary condition.  This shows that twenty-one rows are parameters for which an embedding cannot exist.  For those parameter sets where the $UUU$ blocks must be a $\STS$, this structure could aid in finding embeddings. This includes $a=7$ where $v=25$ is impossible and our searches verify that an embedding in the next smallest $\STS(27)$ does exist.  The parameters for $a=8,9$ both require the $UUU$ blocks to be a $\STS$, and the orders $13$ and $15$ are admissible. In both these cases minimal embeddings exist.  Neither of our searches could find the minimal embedding when $a=9$. We constructed it by hand using a cyclic system with four copies of $\mathbb{Z}_7$ and three fixed points.  Taking advantage of the cyclicity and the fact that any $\STS(15)$ could be taken for the $UUU$ blocks made this feasible.  All minimal embeddings for $2 \leq a < 10$ can be found in the embedding appendix file. 

\begin{table}
\[
   \begin{array}{|c|c|c|c|}
\hline
a & v & (p,a,u) & (PPU,PAA,PAU,PUU,AAU,AUU,UUU) \\
\hline
2 & 13 & (5, 2, 6) & (10, 0, 10, 0, 1, 0, 5) \\
3 & 13 & (4, 3, 6) & (6, 0, 12, 0, 3, 0, 5) \\
4 & 19 & (6, 4, 9) & (15, 0, 24, 0, 6, 0, 12) \\
&&(5, 4, 10) & (10, 0, 20, 5, 6, 4, 12) \\
5 & 19 & (5, 5, 9) & (10, 0, 25, 0, 10, 0, 12) \\
6 & 21 & (5, 6, 10) & (10, 0, 30, 0, 15, 0, 15) \\
7 & 25 & (6, 7, 12) & (15, 0, 42, 0, 21, 0, 22) \\
8 & 27 & (6, 8, 13) & (15, 0, 48, 0, 28, 0, 26) \\
9 & 31 & (7, 9, 15) & (21, 0, 63, 0, 36, 0, 35) \\
10 & 31 & (6, 10, 15) & (15, 0, 60, 0, 45, 0, 35) \\
11 & 37 & (8, 11, 18) & (28, 0, 88, 0, 55, 0, 51) \\
&&(7, 11, 19) & (21, 0, 77, 7, 55, 11, 51)\\
12 & 37 & (7, 12, 18) & (21, 0, 84, 0, 66, 0, 51) \\
13 & 39 & (7, 13, 19) & (21, 0, 91, 0, 78, 0, 57) \\
14 & 43 & (8, 14, 21) & (28, 0, 112, 0, 91, 0, 70) \\
15 & 43 & (7, 15, 21) & (21, 0, 105, 0, 105, 0, 70) \\
16 & 49 & (9, 16, 24) & (36, 0, 144, 0, 120, 0, 92) \\
&& (8, 16, 25) &(28, 0, 128, 8, 120, 16, 92)\\
17 & 49 & (8, 17, 24) & (28, 0, 136, 0, 136, 0, 92) \\
18 & 51 & (8, 18, 25) & (28, 0, 144, 0, 153, 0, 100) \\
19 & 55 & (9, 19, 27) & (36, 0, 171, 0, 171, 0, 117) \\
&&(8, 19, 28)& (28, 0, 152, 8, 171, 19, 117)\\
20 & 55 & (8, 20, 27) & (28, 0, 160, 0, 190, 0, 117) \\
21 & 57 & (8, 21, 28) & (28, 0, 168, 0, 210, 0, 126) \\
22 & 61 & (9, 22, 30) & (36, 0, 198, 0, 231, 0, 145) \\
23 & 63 & (9, 23, 31) & (36, 0, 207, 0, 253, 0, 155) \\
24 & 67 & (10, 24, 33) & (45, 0, 240, 0, 276, 0, 176) \\
&&(9, 24, 34)& (36, 0, 216, 9, 276, 24, 176)\\
25 & 67 & (9, 25, 33) & (36, 0, 225, 0, 300, 0, 176) \\
26 & 69 & (9, 26, 34) & (36, 0, 234, 0, 325, 0, 187) \\
27 & 73 & (10, 27, 36) & (45, 0, 270, 0, 351, 0, 210) \\
28 & 73 & (9, 28, 36) & (36, 0, 252, 0, 378, 0, 210) \\
29 & 79 & (11, 29, 39) & (55, 0, 319, 0, 406, 0, 247) \\
&& (10, 29, 40) & (45, 0, 290, 10, 406, 29, 247)\\
30 & 79 & (10, 30, 39) & (45, 0, 300, 0, 435, 0, 247) \\
31 & 81 & (10, 31, 40) & (45, 0, 310, 0, 465, 0, 260) \\
32 & 85 & (11, 32, 42) & (55, 0, 352, 0, 496, 0, 287) \\
&&(10, 32, 43) & (45, 0, 320, 10, 496, 32, 287)\\
33 & 85 & (10, 33, 42) & (45, 0, 330, 0, 528, 0, 287) \\
34 & 87 & (10, 34, 43) & (45, 0, 340, 0, 561, 0, 301) \\
35 & 91 & (11, 35, 45) & (55, 0, 385, 0, 595, 0, 330) \\
36 & 91 & (10, 36, 45) & (45, 0, 360, 0, 630, 0, 330) \\
37 & 97 & (12, 37, 48) & (66, 0, 444, 0, 666, 0, 376) \\
&&  (11, 37, 49) & (55, 0, 407, 11, 666, 37, 376)\\
38 & 97 & (11, 38, 48) & (55, 0, 418, 0, 703, 0, 376) \\
39 & 99 & (11, 39, 49) & (55, 0, 429, 0, 741, 0, 392) \\
40 & 103 & (12, 40, 51) & (66, 0, 480, 0, 780, 0, 425) \\
&& (11, 40, 52) & (55, 0, 440, 11, 780, 40, 425)\\
41 & 103 & (11, 41, 51) & (55, 0, 451, 0, 820, 0, 425) \\
42 & 105 & (11, 42, 52) & (55, 0, 462, 0, 861, 0, 442) \\
43 & 109 & (12, 43, 54) & (66, 0, 516, 0, 903, 0, 477) \\
&& (11, 43, 55)  & (55, 0, 473, 11, 903, 43, 477)\\
44 & 109 & (11, 44, 54) & (55, 0, 484, 0, 946, 0, 477) \\
45 & 111 & (11, 45, 55) & (55, 0, 495, 0, 990, 0, 495) \\
\hline
\end{array} 
\]
\caption{Smallest admissible parameters for embedding empty graphs, $\overline{K_{a}}$. \label{empty_params}}
\end{table}

\subsection{Embedding paths, $P_a$}\label{path_experiments}

\cref{path_params} shows the parameters of possible target minimal embeddings for path graphs.  The minimum possible $v$ does not grow linearly but rather as $2a + 2 - 2(a-1)/a + \sqrt{8a+1 - 32(a-1)/a}$ \cite{HugganHS2021}.  This, and the fact that $P_a$ does not have any cyclic automorphisms of order larger than 2, eliminates the possibility of using an easy cyclic construction to construct minimal embeddings.  Recursive constructions might have utility.  When $a=2,3$, then $P_2 = K_2$ and $P_3 = K_{1,2}$, so these minimal embeddings have been discussed earlier.  Embeddings matching the minimum parameters given in \cref{path_params} exist for $a=4,5,6,7,8,9$.  It is interesting to note that the minimal embedding for $a=4$ is larger than the one possible for $a=5$. So for this family of graphs the lower bound on $v$ (which is tight for these small values of $a$) is not monotonic in $a$. The easiest way to understand this is to note that \cref{puu_1} and \cref{puu_2} derive from the fact that $|PUU| \geq 0$ \cite{HugganHS2021} and an equivalent formulation of this is that
\[
u \geq p + a -1 - \frac{2e}{p}.
\]
We also note that \cref{u_binom_p_2} is equivalent to 
\[
u \leq \binom{p}{2},
\]
thus
\begin{equation} \label{u_interval}
p + a-1 - \frac{2e}{p} \leq u \leq \binom{p}{2}.
\end{equation}

These lower and upper bounds on $u$ imply that the interval must exist and thus
\[
0 \leq p^3 - 3p^2 - 2(a-1)p + 4e,
\]
which in the case of path graphs is
\begin{equation} \label{p_bound}
    0 \leq p^3 - 3p^2 - 2(a-1)p + 4(a-1).
\end{equation}
The non-monotonicity comes from the interplay of this lower bound on $p$ and the fact that $v$ must be a positive integer such that $v\equiv 1,3 \pmod 6$.  In path graphs when $a$ is either 4 or 5, \cref{p_bound} forces $4 \leq p$, and when $p=4$, \cref{u_interval} forces $u = 6$.  When $a=4$, we get that $v = p + a + u = 14 \not \equiv 1,3 \pmod 6$ and so $5 \leq p$.  However, when $p=5$ the lower bound from \cref{u_interval} gives $7 \leq u$ and thus $v \geq 16$ implying $v \geq 19$, where an embedding meeting the lower bound exists.  In contrast when $a=5$, we get  $v = p + a + u = 15 \equiv 1,3 \pmod 6$ and so $v \geq 15$ where a matching embedding also exists.  All minimal embeddings for $4 \leq a < 10$ can be found in the embedding appendix file.

\begin{table}
\[
\begin{array}{|c|c|c|c|}
\hline
a & v & (p,a,u) & (PPU,PAA,PAU,PUU,AAU,AUU,UUU) \\
\hline
2 & 13 & (5, 2, 6) & (10, 1, 8, 1, 0, 2, 4) \\
3 & 13 & (4, 3, 6) & (6, 2, 8, 2, 1, 4, 3) \\
4 & 19 & (6, 4, 9) & (15, 3, 18, 3, 3, 6, 9) \\
 &  & (5, 4, 10) & (10, 3, 14, 8, 3, 10, 9) \\
5 & 15 & (4, 5, 6) & (6, 4, 12, 0, 6, 3, 4) \\
6 & 19 & (5, 6, 8) & (10, 5, 20, 0, 10, 4, 8) \\
7 & 21 & (5, 7, 9) & (10, 6, 23, 1, 15, 5, 10) \\
8 & 25 & (6, 8, 11) & (15, 7, 34, 1, 21, 6, 16) \\
9 & 27 & (6, 9, 12) & (15, 8, 38, 2, 28, 7, 19) \\
10 & 31 & (7, 10, 14) & (21, 9, 52, 2, 36, 8, 27) \\
 &  & (6, 10, 15) & (15, 9, 42, 9, 36, 18, 26) \\
11 & 31 & (6, 11, 14) & (15, 10, 46, 4, 45, 9, 26) \\
12 & 33 & (6, 12, 15) & (15, 11, 50, 5, 55, 10, 30) \\
13 & 37 & (7, 13, 17) & (21, 12, 67, 5, 66, 11, 40) \\
14 & 39 & (7, 14, 18) & (21, 13, 72, 6, 78, 12, 45) \\
15 & 43 & (8, 15, 20) & (28, 14, 92, 6, 91, 13, 57) \\
 &  & (7, 15, 21) & (21, 14, 77, 14, 91, 28, 56) \\
16 & 43 & (7, 16, 20) & (21, 15, 82, 8, 105, 14, 56) \\
17 & 45 & (7, 17, 21) & (21, 16, 87, 9, 120, 15, 62) \\
18 & 49 & (8, 18, 23) & (28, 17, 110, 9, 136, 16, 76) \\
19 & 51 & (8, 19, 24) & (28, 18, 116, 10, 153, 17, 83) \\
20 & 55 & (9, 20, 26) & (36, 19, 142, 10, 171, 18, 99) \\
 &  & (8, 20, 27) & (28, 19, 122, 19, 171, 38, 98) \\
21 & 55 & (8, 21, 26) & (28, 20, 128, 12, 190, 19, 98) \\
22 & 57 & (8, 22, 27) & (28, 21, 134, 13, 210, 20, 106) \\
23 & 61 & (9, 23, 29) & (36, 22, 163, 13, 231, 21, 124) \\
24 & 63 & (9, 24, 30) & (36, 23, 170, 14, 253, 22, 133) \\
25  & 67  & (10, 25, 32)  & (45, 24, 202, 14, 276, 23, 153) \\
  &&  (9, 25, 33) & (36, 24, 177, 24, 276, 48, 152) \\
 26 & 67 & (9, 26, 32) & (36, 25, 184, 16, 300, 24, 152) \\
 27 & 69 & (9, 27, 33) & (36, 26, 191, 17, 325, 25, 162) \\
 28 & 73 & (10, 28, 35) & (45, 27, 226, 17, 351, 26, 184) \\
   && (9, 28, 36) & (36, 27, 198, 27, 351, 54, 183) \\
 29 & 73 & (9, 29, 35) & (36, 28, 205, 19, 378, 27, 183) \\
 30 & 75 & (9, 30, 36) & (36, 29, 212, 20, 406, 28, 194) \\
 31 & 79 & (10, 31, 38) & (45, 30, 250, 20, 435, 29, 218) \\
 32 & 81 & (10, 32, 39) & (45, 31, 258, 21, 465, 30, 230) \\
 33 & 85 & (11, 33, 41) & (55, 32, 299, 21, 496, 31, 256) \\
   && (10, 33, 42) & (45, 32, 266, 32, 496, 64, 255) \\
 34 & 85 & (10, 34, 41) & (45, 33, 274, 23, 528, 32, 255) \\
 35 & 87 & (10, 35, 42) & (45, 34, 282, 24, 561, 33, 268) \\
 36 & 91 & (11, 36, 44) & (55, 35, 326, 24, 595, 34, 296) \\
   && (10, 36, 45) & (45, 35, 290, 35, 595, 70, 295) \\
 37 & 91 & (10, 37, 44) & (45, 36, 298, 26, 630, 35, 295) \\
 38 & 93 & (10, 38, 45) & (45, 37, 306, 27, 666, 36, 309) \\
 39 & 97 & (11, 39, 47) & (55, 38, 353, 27, 703, 37, 339) \\
 40 & 99 & (11, 40, 48) & (55, 39, 362, 28, 741, 38, 354) \\
 41 & 103 & (12, 41, 50) & (66, 40, 412, 28, 780, 39, 386) \\
   && (11, 41, 51) & (55, 40, 371, 40, 780, 80, 385) \\
 42 & 103 & (11, 42, 50) & (55, 41, 380, 30, 820, 40, 385) \\
 43 & 105 & (11, 43, 51) & (55, 42, 389, 31, 861, 41, 401) \\
 44 & 109 & (12, 44, 53) & (66, 43, 442, 31, 903, 42, 435) \\
   && (11, 44, 54) & (55, 43, 398, 43, 903, 86, 434) \\
 45 & 109 & (11, 45, 53) & (55, 44, 407, 33, 946, 43, 434) \\
\hline
\end{array}
\]
\caption{Smallest admissible parameters for embedding path graphs, $P_{a}$. \label{path_params}}
\end{table}

\subsection{Embedding cycles, $C_a$}\label{cycle_experiments}
\cref{cycle_params} shows the parameters of possible target minimal embeddings for cycle graphs.  The minimum possible $v$ does not grow linearly but rather as $2a -2 + \sqrt{8a-31}$ \cite{HugganHS2021}.  This eliminates the possibility of using an easy cyclic construction to construct minimal embeddings.   A  ``small'' cyclic construction that grows linearly in $a$ might have $P$, $A$, and $U$ be one, two, and three copies, respectively, of $\mathbb{Z}_{n}$ for $n \approx a/2$ with fixed points as appropriate.  When $a=3$ the cycle graph is complete and a minimal embedding is constructed in \cref{k_a_theorem}.  Minimal embeddings matching \cref{cycle_params} exist for $a=4,5,6,7,8,9$. The minimum admissible values of $v$ for $a=9,10$ are $27$ and $25$ respectively; another example of non-monotonicity which happens for the exact same reasons described in \cref{path_experiments}. All minimal embeddings for $4 \leq a < 10$ can be found in the embedding appendix file. 

\begin{table}
\[
\begin{array}{|c|c|c|c|}
\hline
a & v & (p,a,u) & (PPU,PAA,PAU,PUU,AAU,AUU,UUU) \\
\hline
4 & 7 & (2, 4, 1) & (1, 4, 0, 0, 2, 0, 0) \\
5 & 15 & (4, 5, 6) & (6, 5, 10, 1, 5, 5, 3) \\
6 & 19 & (5, 6, 8) & (10, 6, 18, 1, 9, 6, 7) \\
7 & 21 & (5, 7, 9) & (10, 7, 21, 2, 14, 7, 9) \\
8 & 25 & (6, 8, 11) & (15, 8, 32, 2, 20, 8, 15) \\
9 & 27 & (6, 9, 12) & (15, 9, 36, 3, 27, 9, 18) \\
10 & 25 & (5, 10, 10) & (10, 10, 30, 0, 35, 0, 15) \\
11 & 31 & (6, 11, 14) & (15, 11, 44, 5, 44, 11, 25) \\
12 & 31 & (6, 12, 13) & (15, 12, 48, 0, 54, 0, 26) \\
13 & 33 & (6, 13, 14) & (15, 13, 52, 1, 65, 0, 30) \\
14 & 37 & (7, 14, 16) & (21, 14, 70, 0, 77, 0, 40) \\
15 & 39 & (7, 15, 17) & (21, 15, 75, 1, 90, 0, 45) \\
16 & 43 & (8, 16, 19) & (28, 16, 96, 0, 104, 0, 57) \\
 &  & (7, 16, 20) & (21, 16, 80, 9, 104, 16, 55) \\
17 & 43 & (7, 17, 19) & (21, 17, 85, 3, 119, 0, 56) \\
18 & 45 & (7, 18, 20) & (21, 18, 90, 4, 135, 0, 62) \\
19 & 49 & (8, 19, 22) & (28, 19, 114, 3, 152, 0, 76) \\
20 & 51 & (8, 20, 23) & (28, 20, 120, 4, 170, 0, 83) \\
21 & 55 & (9, 21, 25) & (36, 21, 147, 3, 189, 0, 99) \\
 &  & (8, 21, 26) & (28, 21, 126, 13, 189, 21, 97) \\
22 & 55 & (8, 22, 25) & (28, 22, 132, 6, 209, 0, 98) \\
23 & 57 & (8, 23, 26) & (28, 23, 138, 7, 230, 0, 106) \\
24 & 61 & (9, 24, 28) & (36, 24, 168, 6, 252, 0, 124) \\
25 & 61 & (8, 25, 28) & (28, 25, 150, 9, 275, 0, 123) \\
26 & 67 & (10, 26, 31) & (45, 26, 208, 6, 299, 0, 153) \\
 &  & (9, 26, 32) & (36, 26, 182, 17, 299, 26, 151) \\
27 & 67 & (9, 27, 31) & (36, 27, 189, 9, 324, 0, 152) \\
28 & 69 & (9, 28, 32) & (36, 28, 196, 10, 350, 0, 162) \\
29 & 73 & (10, 29, 34) & (45, 29, 232, 9, 377, 0, 184) \\
 &  & (9, 29, 35) & (36, 29, 203, 20, 377, 29, 182) \\
30 & 73 & (9, 30, 34) & (36, 30, 210, 12, 405, 0, 183) \\
31 & 75 & (9, 31, 35) & (36, 31, 217, 13, 434, 0, 194) \\
32 & 79 & (10, 32, 37) & (45, 32, 256, 12, 464, 0, 218) \\
33 & 81 & (10, 33, 38) & (45, 33, 264, 13, 495, 0, 230) \\
34 & 85 & (11, 34, 40) & (55, 34, 306, 12, 527, 0, 256) \\
 &  & (10, 34, 41) & (45, 34, 272, 24, 527, 34, 254) \\
35 & 85 & (10, 35, 40) & (45, 35, 280, 15, 560, 0, 255) \\
36 & 87 & (10, 36, 41) & (45, 36, 288, 16, 594, 0, 268) \\
37 & 91 & (11, 37, 43) & (55, 37, 333, 15, 629, 0, 296) \\
 &  & (10, 37, 44) & (45, 37, 296, 27, 629, 37, 294) \\
38 & 91 & (10, 38, 43) & (45, 38, 304, 18, 665, 0, 295) \\
39 & 93 & (10, 39, 44) & (45, 39, 312, 19, 702, 0, 309) \\ 
40 & 97 & (11, 40, 46) & (55, 40, 360, 18, 740, 0, 339) \\
 41 & 99 & (11, 41, 47) & (55, 41, 369, 19, 779, 0, 354) \\
 42 & 103  & (12, 42, 49)  & (66, 42, 420, 18, 819, 0, 386) \\
    &&     (11, 42, 50)  & (55, 42, 378, 31, 819, 42, 384) \\
 43 & 103  & (11, 43, 49)  & (55, 43, 387, 21, 860, 0, 385) \\
 44 & 105  & (11, 44, 50)  & (55, 44, 396, 22, 902, 0, 401) \\
 45 & 109  & (12, 45, 52)  & (66, 45, 450, 21, 945, 0, 435) \\
      &   & (11, 45, 53)  & (55, 45, 405, 34, 945, 45, 433) \\

\hline
\end{array}
\]
    \caption{Smallest admissible parameters for embedding cycle graphs, $C_{a}$. \label{cycle_params}}
\end{table}

\section{Conclusion}\label{sec_conclusion}

In this article we have shown that for the family of complete graphs, $K_a$, there exist embeddings in Steiner triple systems whose orders exactly meet the bounds derived from \cref{u_bounds}.  For star graphs, $K_{1,a-1}$, we have constructed embeddings that meet the bounds when $a \equiv 3,4,5 \pmod 6$, attain the next possible order of a $\STS$ when $a \equiv 0,2 \pmod 6$, and achieve the third smallest possible order when $a \equiv 1 \pmod 6$.  We have discussed a possible strategy to improve these embeddings to minimal ones using a cyclic construction with four copies of a cyclic group and more than three fixed points.  For both of these graph families the minimal $\STS$ order grows linearly with $a$.  

In \cref{sec_other} we describe our exploration of three other families of graphs where the minimum possible order of a $\STS$ grows as something close to $2a + \sqrt{8a}$: empty graphs, paths, and cycles.  We have designed two different randomized searches, implemented in SageMath, to search for embeddings of graphs in $\STS$.  One proceeds by generating ``random'' $\STS(v)$ via a hill climb and parsing its game tree for what graphs are embedded.  The second builds a collection of blocks which is sufficient to certify an embedding of a given graph and then uses a hill climb to complete these blocks to a $\STS(v)$.  The former seems to work best for smaller graphs and the latter works better for larger instances.  Using these searches and one direct construction, we have determined the minimal embeddings for $\overline{K_a}$, $P_a$, and $C_a$ for all $a \leq 10$.  In empty graphs we have identified a block design structural constraint that is an obstruction to meeting the lower bound from \cref{u_bounds} and a possible aid to finding minimal embeddings when it does not forbid them.  For the paths and cycles all the minimal embeddings meet the minimum values from \cref{u_bounds}.  In both the cases of paths and cycles we observe that the sequence of minimal $\STS$ orders is non-monotonic and explore the reasons which arise from when the bounds do not give $v\equiv 1,3 \pmod 6$.  Due to the squareroot in the probable growth rate for the minimal order, we suggest that recursive constructions might be a better strategy to attempt to construct minimal embeddings for these families of graphs.

Another possible technique that shows promise is using Pasch switches.  A Pasch configuration in a Steiner triple system is a set of four blocks on six points, each point on two of the blocks.  It is unique up to isomorphism with blocks
\[
\{u,v,z\},\quad \{u,y,w\},\quad \{x,v,w\},\quad \{x,y,z\}.
\]
For any Pasch there is a alternate set of four blocks on the same six points that covers the exact same pairs:
\[
\{u,v,w\},\quad \{u,y,z\},\quad \{x,v,z\},\quad \{x,y,w\}.
\]
Replacing the original four blocks with these alternate four yields another Steiner triple system.  In our construction of stars $K_{1,a-1}$, for $a \equiv 1 \pmod 6$ we started with an embedding for $a \equiv 2 \pmod 6$.  Then we found a Pasch where $v, w \in P$, $u \in A$, and $x,y,z \in U$.  After the switch the point $u$ is now on a block with two points from $P$, thus we have transferred the point from $A$ to $U$, therefore deleting it from the graph along with its edges.

The graph families we considered herein were chosen for reaching minimal and maximal edge densities, chromatic index and chromatic index of their complements.  It would be interesting to explore families with more moderate values for these parameters, such as complete bipartite graphs and higher density Cayley graphs.  Both those families of graphs have large automorphism groups which could mean that direct ``cyclic'' methods work well to construct embeddings.  In \cite{HugganHS2021} we used the bounds from \cref{u_bounds} to determine that the minimum admissible $v$ is likely to be a linear function of $a$ when the number of edges is in the interval $[(\sqrt{8a^3-7a^2} - a)/4, a^2/4]$ or grows quadratically in $a$. Exploring families of graphs close to these boundaries to see if the minium embeddings are indeed linear would be interesting.   Graph families that have been studied for \nodekayles{} would also be of some interest \cite{MR4105864}.  

We observed non-monotonicity of the minimum $v$ as a function of $a$ for paths and cycles.  What other families of graphs exhibit this non-monotonicity? The non-monotonicity for both paths and cycles occurred for small values of $a$ and $v$. In other graph families, can these instances of non-monotonicity only occur for relatively small values of $a$?  In our explorations of empty graphs we encountered block design obstructions to meeting the minimum admissible $v$ derived from \cref{u_bounds}.  These came from the fact that if $PUU$ and $AUU$ are empty, then the blocks from $UUU$ must form a $\STS(u)$.  What other design theoretic obstructions might occur for embeddings graphs in Steiner triple systems? In what other families of graphs will we find structural considerations forcing the minimum $v$ to be larger than determined by \cref{u_bounds}?  

The following is an open problem: the embeddings of the graphs we have given here only guarantee that there is some sequence of gameplay from the $\STS$ to the desired graph, without any consideration of optimal play. Thus, the question is whether under optimal play the embedded graphs are indeed appearing in a sequence of optimal play starting from the $\STS$. If not, is there another $\STS$ that guarantees the graph appearing when playing optimally?

Finally, it is unknown whether the so-called \textsc{Nim}-dimension of \nofil{} played on designs is unbounded. This is equivalent to asking whether all possible nim-values occur as the nim value of a block design, or possibly even of a $\STS$. For \nodekayles{} it is known that the \textsc{Nim}-dimension is infinite -- for example \cite{MR4105864} have shown that $2\times n$ Cartesian grids with an added leaf at one end take on all values. As mentioned in the previous paragraph though, optimal game play, as is required for the determining nim-values, might be avoiding those graphs. Thus the problem of possible nim-values of \nofil{} is still open.


\begin{thebibliography}{99}
\bibitem{ANW} M.H. Albert, R.J. Nowakowski, and D. Wolfe. \emph{Lessons in Play: An Introduction to Combinatorial Game Theory}. A. K. Peters, 2007.

\bibitem{MR4105864} S. Brown, S. Daugherty, E. Fiorini, B. Maldonado, D. Manzano-Ruiz, S. Rainville, R. Waechter, and T. W. H. Wong. Nimber sequences of {N}ode-{K}ayles games. \emph{J. Integer Seq.}, {\bf 23}(3) (2020),  Art. 20.3.5, 43.

\bibitem{MR0677049} C.J. Cho. Rotational Steiner Triple Systems. \emph{Discrete Math.}, {\bf 42} No. 2-3 (1982), 153–159.

\bibitem{Handbook} C.J.\ Colbourn and J.H.\ Dinitz, editors. \newblock {\em The CRC Handbook of Combinatorial Designs}. \newblock 2nd ed. CRC Press Series on Discrete Mathematics, Boca Raton, 2007.

\bibitem{colbourn_triple_1999} C.J. Colbourn and A. Rosa. \emph{Triple Systems}. The Clarendon Press, Oxford University Press, 1999.

\bibitem{MR3322805} A. Erzurumluo\u{g}lu and C.A. Rodger. Fair holey hamiltonian decompositions of complete multipartite graphs and long cycle frames. \emph{Discrete Math.}, {\bf 338} No. 7 (2015), 1173--1177.


\bibitem{fleischer_06} R. Fleischer and G. Trippen. Kayles on the Way to the Stars. \emph{Lecture Notes in Comput. Sci.}, {\bf 3846} (2006), 232--245.

\bibitem{HugganHS2021} M.A. Huggan, S. Huntemann, and B. Stevens. The combinatorial game NOFIL played on Steiner triple systems. \emph{J. Comb. Des.}, {\bf 30 (1)} (2022), pp. 19-47.

\bibitem{MR309807} M. Lorea. Ensembles stables dans les hypergraphes. \emph{C. R. Acad. Sci. Paris S\'{e}r. A-B}, {\bf 275} (1972), A163--A165.

\bibitem{MR125024} E.S. O'Keefe. Verification of a conjecture of {T}h. {S}kolem. \emph{Math. Scand.}, {\bf 9} (1961), pp. 80--82.

\bibitem{MR211884} A. Rosa. A note on cyclic {S}teiner triple systems. \emph{Mat.-Fyz. \v{C}asopis. Sloven. Akad. Vied.}, {\bf 16} (1966), pp. 285--290.

\bibitem{sagemath} The Sage Developers. {S}ageMath, the {S}age {M}athematics {S}oftware {S}ystem ({V}ersion 10.8.beta0). {\tt https://www.sagemath.org} (2025).
  
\bibitem{schaefer_78} T.J. Schaefer. On the complexity of some two-person perfect-information games. \emph{J. Comput. Syst. Sci.}, {\bf 16} No. 2 (1978), 185--225.

\bibitem{Siegel} A.N. Siegel. \emph{Combinatorial Game Theory}. American Math. Soc., 2013.

\bibitem{simpson} J.E. Simpson. Langford sequences: Perfect and hooked. \emph{Discrete Math.} {\bf 44} (1983), 97–104.

\bibitem{MR0092797} Th. Skolem. On certain distributions of integers in pairs with given differences. \emph{Math. Scand.}, {\bf 5} (1957), 57--68.


  
\end{thebibliography}
\end{document}